\begin{document}

\title{Unusual Geodesics in generalizations of Thompson's Group
$F$}
\author{Claire Wladis}
\address{Department of Mathematics, BMCC/CUNY, Department of
Mathematics, 199 Chambers St., New York, NY 10007}
\email{cwladis@gmail.com} \keywords{Thompson's~group, combable,
regular~language, geodesics, dead~ends, dead~end~depth,
k--pockets}
\subjclass{20F65} 
\thanks{The author would like to thank Sean Cleary for his
support and advice during the preparation of this article and
the anonymous reviewer for their helpful suggestions during the
revision process. The author also acknowledges support from the
CUNY Scholar Incentive Award. }

\makeatletter
\def\maxwidth{%
  \ifdim\Gin@nat@width>\linewidth
    \linewidth
  \else
    \Gin@nat@width
  \fi
} \makeatother

\newcommand {\lft} {\mbox{${\mathcal L}$}}
\newcommand {\rt} {\mbox{\ensuremath{{\mathcal R}}}}
\newcommand {\m} {\mbox{\ensuremath{{\mathcal M}}}}
\newcommand {\mi} {\mbox{${\mathcal M^{\it i}}$}}
\newcommand {\mj} {\mbox{${\mathcal M^{\it j}}$}}
\newcommand {\mpp} {\mbox{${\mathcal M^{\it p}}$}}
\newcommand {\mone} {\mbox{${\mathcal M}^1$}}
\newcommand {\mtwo} {\mbox{${\mathcal M}^2$}}
\newcommand {\mpminusone} {\mbox{${\mathcal M}^{\it p-1}$}}

\newcommand {\lftnot} {\mbox{${\mathcal L_{\emptyset}}$}}
\newcommand {\lftl} {\mbox{${\mathcal L_{\rm L}}$}}
\newcommand {\rtnot} {\mbox{${\mathcal R_{\emptyset}}$}}
\newcommand {\rtr} {\mbox{${\mathcal R_{\rm R}}$}}
\newcommand {\rtj} {\mbox{${\mathcal R_{\it j}}$}}
\newcommand {\rtjone} {\mbox{${\mathcal R_{\it j_1}}$}}
\newcommand {\rtjtwo} {\mbox{${\mathcal R_{\it j_2}}$}}
\newcommand {\minot} {\mbox{${\mathcal M^{\it
i}_{\emptyset}}$}}
\newcommand {\mionenot} {\mbox{${\mathcal M^{\it
i_1}_{\emptyset}}$}}
\newcommand {\mitwonot} {\mbox{${\mathcal M^{\it
i_2}_{\emptyset}}$}}
\newcommand {\mij} {\mbox{${\mathcal M^{\it i}_{\it j}}$}}
\newcommand {\mione} {\mbox{${\mathcal M}^{\it i}_1$}}
\newcommand {\mionejone} {\mbox{${\mathcal M^{\it i_1}_{\it
j_1}}$}}
\newcommand {\mitwojtwo} {\mbox{${\mathcal M^{\it i_2}_{\it
j_2}}$}}
\newcommand {\mionek} {\mbox{${\mathcal M^{\it i_1}_{\it
k}}$}}
\newcommand {\mitwok} {\mbox{${\mathcal M^{\it i_2}_{\it
k}}$}}
\newcommand {\rti} {\mbox{${\mathcal R_{\it i}}$}}
\newcommand {\rtk} {\mbox{${\mathcal R_{\it k}}$}}
\newcommand {\rtone} {\mbox{${\mathcal R}_1$}}
\newcommand {\riplusone} {\mbox{${\mathcal R}_{i+1}$}}
\newcommand {\rtp} {\mbox{${\mathcal R_{\it p}}$}}
\newcommand {\rtstar} {\mbox{${\mathcal R_*}$}}
\newcommand {\mjnot} {\mbox{${\mathcal M^{\it
j}_{\emptyset}}$}}
\newcommand {\mknot} {\mbox{${\mathcal M^{\it
k}_{\emptyset}}$}}
\newcommand {\mpnot} {\mbox{${\mathcal M^{\it
p}_{\emptyset}}$}}
\newcommand {\mik} {\mbox{${\mathcal M^{\it i}_{\it k}}$}}
\newcommand {\mkl} {\mbox{${\mathcal M^{\it k}_{\it l}}$}}
\newcommand {\mlk} {\mbox{${\mathcal M^{\it l}_{\it k}}$}}
\newcommand {\mpq} {\mbox{${\mathcal M^{\it p}_{\it q}}$}}
\newcommand {\mpi} {\mbox{${\mathcal M^{\it p}_{\it i}}$}}
\newcommand {\mpj} {\mbox{${\mathcal M^{\it p}_{\it j}}$}}
\newcommand {\mpk} {\mbox{${\mathcal M^{\it p}_{\it k}}$}}
\newcommand {\mistar} {\mbox{${\mathcal M^{\it i}_*}$}}
\newcommand {\mpstar} {\mbox{${\mathcal M^{\it p}_*}$}}
\newcommand {\mlnot} {\mbox{${\mathcal M^{\it
l}_{\emptyset}}$}}
\newcommand {\mrs} {\mbox{${\mathcal M^{\it r}_{\it s}}$}}

\newcommand {\gen} {\mbox{$\{x_0,\dots,x_p\}$}}
\newcommand {\genpm} {\mbox{$\{x_0^\pm1,\dots,x_p^\pm1\}$}}

\newcommand{\x}[1]{\mbox{$x_{#1}$}}
\newcommand{\xinv}[1]{\mbox{$x_{#1}^{-1}$}}
\newcommand{\xpm}[1]{\mbox{$x_{#1}^{\pm 1}$}}

\newtheorem {lengththm}{Theorem}[section]
\newtheorem{onecaret}[lengththm]{Theorem}
\newtheorem{addcaret}[lengththm]{Theorem}

\newtheorem{noaddedcarets}[lengththm]{Remark}

\newtheorem{seesawexist}{Corollary}[section]
\newtheorem{kfellowtrav}[seesawexist]{Proposition}
\newtheorem{notcombable}[seesawexist]{Theorem}
\newtheorem{likef2}[seesawexist]{Remark}
\newtheorem{notreg}[seesawexist]{Theorem}
\newtheorem{conetype}[seesawexist]{Definition (cone type)}
\newtheorem{infconetype}[seesawexist]{Theorem}
\newtheorem{finmanyconetypes}[seesawexist]{Lemma}

\newtheorem{formdeadends}{Theorem}[section]
\newtheorem{deadends}[formdeadends]{Definition (dead ends)}
\newtheorem{depth}[formdeadends]{Definition (depth of a dead
end element)}
\newtheorem{deadenddiag}[formdeadends]{Theorem}
\newtheorem{depthdeadends}[formdeadends]{Theorem}

\newtheorem{notMAC}{Theorem}[section]
\newtheorem{sufficientdistance}[notMAC]{Lemma}
\newtheorem{hrexists}[notMAC]{Lemma}
\newtheorem{lengthleftsided}[notMAC]{Lemma}
\newtheorem{criticalline}[notMAC]{Definition}
\newtheorem{hlonpath}[notMAC]{Lemma}
\newtheorem{last3caretsofhll}[notMAC]{Lemma}
\newtheorem{lengthofleftsided}[notMAC]{Lemma}
\newtheorem{hrisleftsided}[notMAC]{Remark}
\newtheorem{defny}[notMAC]{Definition}
\newtheorem{leftsidedcommutewy}[notMAC]{Remark}
\newtheorem{lengthofyl}[notMAC]{Lemma}
\newtheorem{hl=yhl}[notMAC]{Lemma}
\newtheorem{hrinvhlprime}[notMAC]{Lemma}
\newtheorem{lengthhrinvhlprime}[notMAC]{Corollary}
\newtheorem{MAC}[notMAC]{Theorem}
\newtheorem{almostconvex}[notMAC]{Definition (almost convex)}
\newtheorem{minalmostconvex}[notMAC]{Definition (minimally
almost convex)}
\newtheorem{rightfoot}[notMAC]{Definition (right foot)}
\newtheorem{critleaf}[notMAC]{Definition (critical leaf)}
\newtheorem{leftsided}[notMAC]{Definition (left-sided)}
\newtheorem{width}[notMAC]{Definition (width of a group
element)}

\newtheorem{thm}{Theorem}[section]
\newtheorem{lem}{Lemma}[section]
\newtheorem{cor}{Corollary}[section]
\newtheorem{rmk}{Remark}[section]
\newtheorem{defn}{Definition}[section]
\newtheorem{nota}{Notation}[section]

\rm

\begin{abstract}
We prove that seesaw words exist in Thompson's Group $F(N)$ for
$N=2,3,4,...$ with respect to the standard finite generating
set $X$.  A seesaw word $w$ with swing $k$ has only geodesic
representatives ending in $g^k$ or $g^{-k}$ (for given $g\in
X$) and at least one geodesic representative of each type.  The
existence of seesaw words with arbitrarily large swing
guarantees that $F(N)$ is neither synchronously combable nor
has a regular language of geodesics.  Additionally, we prove
that dead ends (or $k$--pockets) exist in $F(N)$ with respect
to $X$ and all have depth 2.  A dead end $w$ is a word for
which no geodesic path in the Cayley graph $\Gamma$ which
passes through $w$ can continue past $w$, and the depth of $w$
is the minimal $m\in\mathbb{N}$ such that a path of length
$m+1$ exists beginning at $w$ and leaving $B_{|w|}$. We
represent elements of $F(N)$ by tree-pair diagrams so that we
can use Fordham's metric.  This paper generalizes results by
Cleary and Taback, who proved the case $N=2$.
\end{abstract}

\maketitle \tableofcontents

\section{Generalizations of Thompson's groups $F$}
\subsection{Introduction}
\par Thompson's group $F(N)$ is a generalization of the group
$F$, which
R. Thompson introduced in the early 1960's (see \cite{F}) while
constructing the groups $V$ and $T$ (also often referred to in
the literature as Thompson's groups), which were the first
known examples of infinite, simple, finitely-presented groups.
Here $F\subseteq T\subseteq V$.  Higman in \cite{Higman} later
generalized $T$ into an infinite class of groups, and Brown
applied this same generalization to the groups $F$ and $V$ in
\cite{Brown}.  This paper only considers generalizations of the
group $F$.

\begin{defn}[Thompson's group $F(N)$]
Thompson's group $F(N)$, for $N\in\{2,3,4,...\}$, is the group
of piecewise-linear orientation-preserving homeomorphisms of
the closed unit interval with finitely-many breakpoints in the
ring $\mathbb{Z}[\frac{1}{N}]$ and slopes in the cyclic
multiplicative group $\langle N\rangle$ in each linear piece.
\end{defn}

\par $F$ is then simply the group $F(2)$.  Throughout this
paper, we use the convention that $N=p+1$ for
$p\in\mathbb{Z}_+$ (we note that $p$ need not be prime, but is
rather a positive integer); this is because the numbering of
tree-pair diagrams and some algebraic expressions will be
simpler with the use of $p$ rather than $N$.

\par $F(p+1)$, $p\in\mathbb{N}$, is
finitely-presented, infinite-dimensional, torsion-free and of
type $FP_\infty$ (see \cite{Brown2}).  This paper is
specifically interested in the Cayley graph of $F(p+1)$ with respect to the standard
finite generating set, about which relatively little is known.
One known result is that $F(p+1)$  satisfies no nontrivial
convexity condition with respect to the standard finite
generating set (see \cite{F2notMAC}, \cite{FnotAC}, and
\cite{notMAC}).  More detailed information about Thompson's
groups can
be found in \cite{intronotes}.

\subsection{Unusual geodesics}
\par The first unusual kind of geodesic behavior in $F(p+1)$ to
be explored in this paper is illustrated by the existence of
seesaw words.

\subsubsection{Seesaw words}
\par Groups with seesaw words with arbitrarily large swing
are not synchronously combable by geodesics and do not have a
regular language of geodesics.  In \cite{seesaw}, Cleary and
Taback show that Thompson's group $F(2)$ has seesaw words of
arbitrarily large swing; we generalize this
argument to $F(p+1)$ for $p>2$.  Cleary and Taback
have also shown in \cite{lamplight} that the Lamplighter groups
and certain generalized wreath products also have
seesaw words of arbitrarily large swing.

\begin{defn}[seesaw word]\label{seesawword}
A word $w$ with length $|w|$ is a seesaw word with swing
$k\in\mathbb{N}$ with respect to $g$ in generating set $X$
if the following hold:
\begin {enumerate}
\item $|wg^l|=|w|-|l|$ for $0<|l|\le k$ \item
$|wg^lh|\ge|wg^l|$ for all $h\in X\cup X^{-1}$ such that
$h\ne g$, when $0<|l|<k$
\end{enumerate}
In other words, all geodesic representatives of a seesaw word
$w$ end in either $g^k$ or $g^{-k}$, and there is at least one
geodesic representative of each type.
\end{defn}

\begin{defn}[(synchronous) k-fellow traveller
property)]\label{kfeltravdef}
Let $\lambda$ and $\eta$ be geodesic paths in the Cayley graph
$\Gamma(G,X)$ that the identity to $w$
and $v$, respectively.  Then $\lambda$ and $\eta$
(synchronously) k-fellow travel if for some constant $k$:
\begin{enumerate}
\item $d_{\Gamma}(w,v)=1$ and \item For any 2 vertices $h$ on $\lambda$
and $g$ on $\eta$, if
$|h|=|g|$, then $d_{\Gamma}(h,g)\le k$.
\end{enumerate}
\end {defn}

\begin{defn}[(synchronously) combable]\label{combable}
\par A group is {\it (syn.) combable} if
it can
be represented by a language of words satisfying the
(syn.) k-fellow traveller property.
\end{defn}

\subsubsection{Dead ends}
\par Dead ends were first defined by Bogopolski in 1997 in
\cite{bogo}.  Any geodesic representative of a dead end
word cannot be extended past that word in the Cayley graph.
The depth of a dead end then measures how severe this behavior
is: for a dead end element $w$ of length $m$, a depth of $k$
means that only paths beginning at $w$ of length greater than
$k$ can leave the ball $B_m$.

\begin{defn}[dead ends] \label{deadends}
An element $w$ of a group $G$ is a dead end with respect to the
given generating set $X$ if $|wg^{\pm 1}|\le|w|$ for all $g\in
X$.
\end{defn}

\par In this
paper we give a general form for all dead end elements in
$F(p+1)$.

\begin{defn}[depth of a dead end element]\label{depth}
For a dead end element $w$, let $|w|=n$.  The depth of a dead
end element $w$ in the generating set $X$ is the smallest
number $m$ such that $|wg_1\cdots g_{m+1}|\le n$ for all
possible $g_1,...,g_{m+1}\in X\cup X^{-1}$.  If no such $m$
exists, we say that the dead end has infinite depth.

In other words, the depth of a dead end is the smallest integer
$m$ such that all paths of length $m$ or less emanating from
$w$ remain in the ball $B_n$ (centered at the identity), but
for which there exists a path of length $m+1$ which leaves
$B_n$.
\end{defn}

\par Clearly all dead ends have depth greater than or equal to
1 (and for groups with all relators of even length this depth is
greater than or equal to 2).  If a group has a dead end $w$
with depth $k\ge1$, we
can also say that $w$ is a {\it k--pocket} in the Cayley graph
of the group. We will show that while $F(p+1)$ has dead ends,
it does not have deep k--pockets, because all dead ends in
$F(p+1)$ have depth 2.

\par The property of having dead ends has been explored for
several groups already.  Thompson's group $F(2)$ has dead ends,
all of which have depth 2, as Cleary and Taback show in
\cite{combpropF}; our results simplify to this case when $p=1$.
In contrast, dead ends with arbitrary depth exist in the
Lamplighter groups, and in some more general wreath products
with respect to the natural generating sets (see \cite
{lamplight}).

\subsection{Tree-pair diagram representatives}
\par What follows for the remainder of this section is
summarized from \cite{notMAC}; greater detail can be found there.

\par Because elements of $F(p+1)$ are piecewise linear maps which
take the $i$th subinterval of the domain to the $i$th
subinterval of the range, any
element of $F(p+1)$ is wholly determined by
the subdivisions present in its domain and range.  In fact, any
element $x\in F(p+1)$ can be entirely determined by an ordered
pair of two sets of consecutive subintervals of $[0,1]$:
$$(D=\{I_0=[a_0,a_1],...,I_k=[a_k,a_{k+1}]\},R=\{J_0=[b_0,b_1],...,J_k=[b_k,b_{k+1}]\})$$
where $a_i<a_{i+1},b_i<b_{i+1}$ for all $i\in\{0,...,k+1\}$,
and $x$ is the map that takes $I_i$ to $J_i$ for all
$i=1,...,k$.  {/it Tree-pair diagrams}, which we will use to
represent elements of $F(p+1)$, are a geometric
representation of this idea.

\par A graph of $p+2$ vertices, one with degree
$p+1$ ({\it parent vertex}) and the rest with degree 1 ({\it child vertices}), and $p+1$
directed edges is a {\it $(p+1)$--ary caret}.  A diagram which consists of $(p+1)$--ary carets, each with parent vertex oriented upwards and sharing at least one vertex with another caret, is called a {\it $(p+1)$--ary
tree}.  The graph consisting of an ordered pair of $(p+1)$--ary
trees with the same number of leaves (or equivalently the same number of carets) is a {\it $(p+1)$--ary
tree-pair diagram}.

\begin{defn}[nodes and leaves]\label{nodesleaves} Within a
$(p+1)$--ary tree, any vertex which is the parent
vertex of a caret (i.e. which has degree $p+1$ or $p+2$) is a {\it
node}; any
vertex which has degree 1 is a {\it leaf}.  We note that here, the term node refers only to vertices which are not
leaves; it is not a synonym for vertex.
\end{defn}

\par The top
node of a $(p+1)$--ary tree is the {\it root} or {\it root
node}, and the caret which contains it is called the {\it root
caret}.  We refer to the
leftmost or rightmost directed edge of a tree as the {\it left} or {\it
right edge} of the tree respectively.

\subsubsection{Leaf ordering in a tree-pair diagram}
\par We recall that an arbitrary element $x$ of $F(p+1)$ can be
entirely determined by an ordered pair of sets of consecutive
subintervals of $[0,1]$:
$(D=\{I_0,...,I_k\},R=\{J_0,...,J_k\})$.  Each leaf in a
tree-pair diagram will correspond to one of the intervals
$I_0,...,I_k,J_0,...,J_k$ in the following way: if the parent
node of a caret represents an interval $[a,b]$, then the child
nodes of that caret represent the subintervals
$[a,a+\frac{b-a}{p+1}],...,[a+\frac{p(b-a)}{p+1},b]$; we let
the root node of each tree in a tree-pair diagram represent
$[0,1]$, so each leaf in the first (or second) tree in the the
tree-pair diagram now represents a subinterval $I_0,...,I_k$
(or $J_0,...,J_k$).  We then number the leaves in the tree by
assigning each of them the index number of the interval which
they represent.  For more details see
\cite{notMAC}.  We can see a tree-pair diagram with all
its leaves numbered in Figure \ref{exampletree}.

\begin{figure}[htbp]
\centering
\includegraphics[width=4.5in]{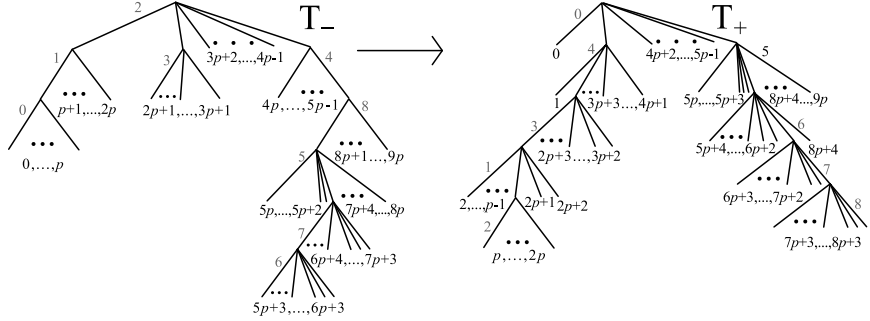}
\caption{Tree-pair diagram representative of an element of
$F(p+1)$ with all carets and leaves numbered.}
\label{exampletree}
\end{figure}

\subsubsection{Minimal tree-pair diagrams}
\par The group $F(p+1)$ induces an equivalence relation on the
set of $(p+1)$--ary tree-pair diagrams.

\begin{defn}[equivalent tree-pair diagrams]\label{equivtpds}
Two $(p+1)$--ary tree-pair diagrams are {\it equivalent} if
they represent the same element of $F(p+1)$.
\end{defn}

\begin{defn}[minimal tree-pair diagram
representative]\label{mintpdrep}
The tree-pair diagram which has the smallest number of leaves
of any diagram in its equivalence class is the {\it minimal
tree-pair diagram representative} of the element of $F(p+1)$
represented by that equivalence class.
\end{defn}

\par Within a $(p+1)$--ary tree-pair diagram, the domain tree
is
referred to as the {\it negative} tree and is often denoted by
$T_-$, whereas the range tree is referred to as the {\it
positive} tree and is denoted by $T_+$.  We will denote a
tree-pair diagram with negative tree $T_-$ and positive tree
$T_+$, by $(T_-,T_+)$.

\par We describe how we may obtain the equivalent minimal
tree-pair diagram representative of an element of $F(p+1)$ from
an arbitrary representative.
We say that a caret is {\it exposed}
if all of its children are leaves.  If there is an exposed
caret in both the negative and positive trees, and all the
leaves of the exposed caret in each tree have the same index
numbers, then we can remove the pair of exposed carets in the
tree-pair diagram because this does not change the element
which the tree-pair diagram represents.  This is the only way
in which a tree-pair diagram can be reduced. So, every element
of $F(p+1)$ has a unique representation as a minimal tree-pair
diagram.  We will write $w=(T_-,T+)$ to denote that $(T_-,T_+)$
is the minimal tree-pair diagram representative of $w$.

\begin{nota}[$((Tx)_-,(Tx)_+)$,
$((Tx)'_-,(Tx)_+)'$]\label{prodtpd}
When $w=(T_-,T_+)$ and $x\in F(p+1)$, we denote the (possibly
non-minimal) tree-pair diagram representative of the product
$wx$ by $((Tx)_-,(Tx)_+)$.  We will denote the minimal
tree-pair diagram representative of $wx$ by
$((Tx)'_-,(Tx)'_+)$.
\end{nota}

\subsubsection{Multiplying tree-pair diagrams}
\par Multiplication of two elements of $F(p+1)$ is simply
function
composition.  We will use functional notation so that
multiplying $x$ by $y$ on the right will be written $xy$, which
denotes $x\circ y$.

\par To compute the product $xy$ of $x=(T_-,T_+)$ and
$y=(S_-,S_+)$ using the tree-pair diagram
representatives, we first make $S_+$ identical to
$T_-$. This is possible because we can add a caret to any leaf
in $S_+$ as long as we add a caret to the leaf with the same
index number in $S_-$, because this is just the reverse of the
process removing exposed caret pairs. In the same way, we can
add a caret to any leaf in $T_-$.  We continue adding carets to
the tree-pair diagrams in this way until $T_-$ and $S_+$ are
identical. If we let $(T_-^*,T_+^*)$ and $(S_-^*,S_+^*)$ denote
the tree-pair diagrams for $x$ and $y$ respectively once carets
have been added as needed so that $S_+^*=T_-^*$, then
$(S_-^*,T_+^*)$ is the (possibly non-minimal) tree-pair diagram
representative of $xy$.  To see an example of multiplication of tree-pair diagrams, see Figure \ref{mult}.

\begin{figure}[t]
\centering
\includegraphics[width=3in]{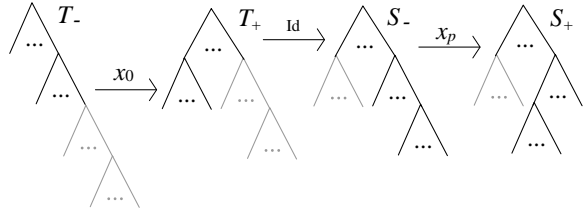}
\caption{Multiplication of tree-pair diagrams representing the
product $x_px_0$ in $F(p+1)$ (each caret has $p+1$ edges) where $x_0=(T_-,T_+),x_p=(S_-,S _+)$.
Here $T_-,T_+,S_-,S_+$ are the trees represented by only black
carets.  $T_-^*,T_+^*,S_-^*,S_+^*$ are then the trees
represented by the union of black and grey carets, and
$x_px_0=(T_-^*,S_+^*)$.  }\label{mult}
\end{figure}

\subsection{Caret types}\label{types}
\par In order to understand the metric on $F(p+1)$ developed by
Fordham in \cite{length}, which we will need to prove the
results of this paper, we must first categorize the carets in a
tree into the following
types:
\begin{enumerate}
\item \lft.  This is a left caret; a left caret is any
caret that has one edge on the left side of the tree.
The root caret is defined to be of this type. \item
\rt.  This is a right caret; a right caret is any caret
(except the root caret) that has one edge on the right
side of the tree. \item \m.  This is a middle caret;
all carets which are neither left nor right carets are
middle carets.
\end{enumerate}

\subsection{Group presentations}
\par $F(p+1)$ has a standard infinite presentation and
a standard finite presentation; the infinite presentation can
be obtained from the finite presentation by induction.

\par The standard infinite
presentation is \cite{Brown}:
$$F(p+1)=\{x_0,x_1,x_2,...|x_ix_j=x_{j+p}x_i\hbox{ for
}i<j\}$$

\begin{figure}[htbp]
\centering
\includegraphics[width=3in]{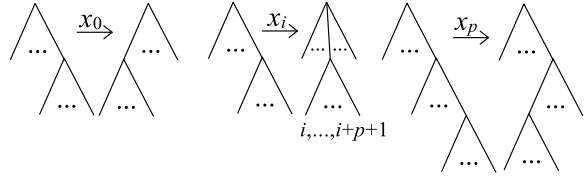}
\caption{The standard finite generators of $F(p+1)$, where
$i\in\{1,...,p-1\}$ (each caret has $p+1$ edges).} \label{geninf}
\end{figure}

\par The standard
finite presentation is \cite{Brown} (see Figure \ref{geninf}):

$$\{x_0,x_1,...,x_{p}\left|\begin{array}{l}
    [x_0x_i^{-1},x_j]\hbox{ when } i<j,
[x_0^2x_i^{-1}x_0^{-1},x_j]\hbox{ when } i\ge j-1,\\
    {[x_0^3x_{p}^{-1}x_0^{-2}, x_1]}
    \hbox{. Here }i,j=0,...,p.
    \end{array}
    \right\}
$$

\par From now on we will use the notation
$X$ to represent the generating set \gen.

\par In \cite{length}, Fordham developed a metric to calculate
geodesic lengths in the Cayley graph of $F(p+1)$ generated by
$X$ (this is a generalization of
his work in \cite{fordhamthesis} and \cite{fordhamgd}).  The
material in this section is primarily paraphrased from
\cite{length}.  This metric depends upon the exact types of
carets within a $(p+1)$--ary tree, so before we proceed to
present the metric, we further classify caret types.

\subsection{Further Classification of Carets of type
\m}\label{mtypes}
\par We further subcategorize the middle carets into $p$
subtypes: \mi\ for $i=1,2,...,p$.  The value of $i$ depends
upon the type of the middle caret's parent caret and its
relative location with respect to its parent caret.  Figure
\ref{type} shows the subtype of each child caret for a given
parent caret type.  For example, in Figure \ref{exampletree},
$\wedge_3, \wedge_5, \wedge_6, \wedge_7\in T_-$ have types
$\m^1, \m^p, \m^3, \m^3$ respectively, and
$\wedge_1,\wedge_2,\wedge_3,\wedge_4,\wedge_6,\wedge_7,\wedge_8\in
T_+$ have types $\m^2,\m^p,\m^2,\m^1,\m^4,\m^3,\m^3$
respectively.

\begin{figure}[t]
\centering
\includegraphics[width=3.5in]{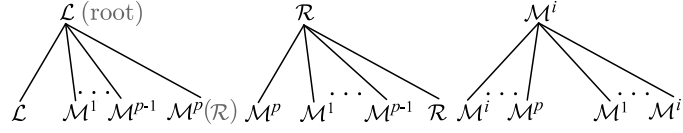}
\caption{For each of the parent caret types given above: \lft,
\rt, and \mi\ for $i=2,...,p$, the caret type listed below each child is the
type of the child caret in that position, if one exists.}
\label{type}
\end{figure}

\subsection{Caret/Node order}
\par The metric is based on numbering all the carets in each
tree of a tree-pair diagram and pairing up each caret in the
negative tree with the caret in the positive tree with the same
index number.  The type of each caret in the pair then
determines the contribution of that pair of carets to the
length of the element which the tree-pair diagram represents.

\begin{defn}[ancestor, descendant]
For any two vertices $a$ and $b$ on an $n$-ary tree, vertex $a$
is the {\it ancestor} of vertex $b$ if it is on the directed
path from the root node to vertex $b$. Similarly, vertex $b$ is
the {\it descendent} of vertex $a$ if vertex $a$ is the
ancestor of vertex $b$.
\end{defn}

\par To order the carets in a $(p+1)$--ary tree, we first order
the nodes of the tree.  Once we have ordered the nodes within a
tree, we can simply number them, beginning with 0 and assigning
numbers so that the numbering reflects the placement of the nodes in the order.  And once we have numbered the nodes of a
tree, we can number the carets in the tree simply by assigning
to each caret the index number of its parent node.

\par To order all the nodes within a tree, we begin by ordering
all the nodes within a single caret.  Since every caret in a
tree has at least one node which is common to another caret in
the tree, any absolute order for the nodes within an arbitrary
caret induces an absolute order on all the nodes in a tree
(i.e. for any 3 nodes within a single caret $a,b,c$ such that
$a<b<c$ in the order, for an arbitrary descendant node $b'$ of
$b$, we must also have $a<b'<c$).

\par Now we describe this absolute order of nodes within a
caret.  The type of a given caret
determines which child nodes will come before the parent node
in the order and which will come after it (see Figure
\ref{type2}). For an arbitrary caret, we assign index numbers
$\alpha _0,...,\alpha_{p+1}$ to every vertex within the caret;
how these index numbers will be assigned depends upon the caret
type: For left and right carets, the leftmost child vertex of
the caret will have index number $\alpha_0$, the root vertex
will have index number $\alpha_1$, and the remaining child
vertices will have index numbers $\alpha_2,...,\alpha_{p+1}$.  For carets of type \mi,
the $p-i+1$ leftmost child vertexes will have index numbers
$\alpha_0,...,\alpha_{p-i}$, the parent vertex will have index number
$\alpha_{p-i+1}$, and the remaining child vertices will have
index numbers $\alpha_{p-i+2},...,\alpha_{p+1}$.  For a visual summary of these
details, see Figure \ref{type2}.  Then these vertex index
numbers induce an ordering of the nodes of the caret as
follows: for arbitrary nodes $a$ and $b$ in the caret with
vertex index numbers $\alpha_j$ and $\alpha_k$, $a<b$ if and
only if $j<k$.

\begin{figure}[t]
\centering
\includegraphics[width=2.5in]{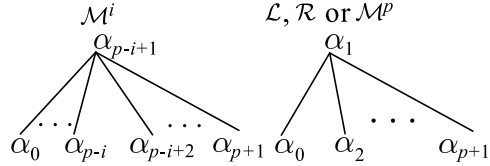}
\caption{For each of the caret types given above: $\m^1$,\mi\
for $i=2,...,p-1$, and \lft,\rt, or $\m^p$, the order of the
nodes of the caret is defined so that for arbitrary nodes $a$
and $b$ with vertex index numbers $\alpha_j$ and $\alpha_k$,
$a<b$ if and only if $j<k$.} \label{type2}
\end{figure}

\par Within a
tree-pair diagram, the carets in the negative and positive
trees with the same index number are paired together and
referred to as a caret pair.  The caret pair with index number
$i$ is called the {\it $ith$ caret pair}, and is denoted by
$\wedge_i$.

\begin{nota}[$\wedge_i$]\label{wedgei}
We use the notation $\wedge_i$ to represent both a single caret
with index number $i$ and to represent the $ith$ caret pair in
a tree-pair diagram; when we use this notation, which of these is meant should be clear from the context.
\end{nota}

\subsection{Final classification of caret
types}\label{alltypes}
\par The following definitions will further refine
our categories of caret types so that we can finally proceed to
the metric.

\begin{defn}[successor, predecessor]\label{sucpred} For two
carets $\wedge_i$ and $\wedge_j$ in a tree, we say that
$\wedge_i$ is a {\it successor} of $\wedge_j$ whenever $i>j$,
and we say that $\wedge_i$ is a {\it predecessor} of $\wedge_j$
whenever $i<j$.
\end{defn}

\begin{rmk}[ancestor/descendant vs. successor/predecessor]
We must not confuse successors with children (or descendants)
and predecessors with parents (or ancestors).  $\wedge_B$ is a
child of $\wedge_A$ if and only if the parent vertex of
$\wedge_B$ is a child vertex of $\wedge_A$, but $\wedge_B$ is a
successor of $\wedge_A$ if and only if $B>A$.  The properties
of being a child or successor of some fixed caret are wholly
independent.  For example, in Figure \ref{exampletree}, in
$T_+$ $\wedge_1$ is a child but not a successor of $\wedge_3$,
and in $T_-$, $\wedge_8$ is a successor but not a child of
$\wedge_6$; in contrast, in $T_-$, $\wedge_7$ is both a child
and a successor of $\wedge_5$.
\end{rmk}

\begin{defn}[leftmost caret]
When we refer to a caret as the leftmost caret with some
property $X$, we mean precisely the caret with property $X$
whose index number is smallest.  So, for example, the leftmost
child of $\wedge_i$ would be the child of $\wedge_i$ with the
smallest index number and the leftmost child successor would be
the caret with the smallest index number which is both a child and a successor of $\wedge_i$.
\end{defn}

\par And now we enumerate the final set of categories of caret
type:

\begin{enumerate}
\item \lftnot.  This is the first and leftmost caret of the
tree. There is one and only one caret of this type in
any non-empty tree.
\item \lftl.  Any left caret not of
type \lftnot\ is of this type.
\item \rtnot.  This is
any right caret for which all successor carets are
right carets.  For example, in Figure
\ref{exampletree}, $\wedge_8\in T_-$ is the only caret
of type \rtnot.
\item \rtr.  This is a right caret
whose immediate successor is a right caret, but which
has at least one successor which is not a right caret.
For example, in Figure \ref{seesawfig},
$\wedge_{m+2}\in S_+$ is of type \rtr\ because its
immediate successor is $\wedge_{m+3}$, which is type
\rt, but its successor $\wedge_{m+np+n}$ is not type
\rt.
\item \rtj.  This is a right caret whose immediate
successor is not a right caret and whose leftmost child
successor is type \mj\ when $j<p$, or \rt\ when
$j=p$.  For example, in $T_+$ in Figure
\ref{exampletree}, the leftmost child successor of
$\wedge_5$ is $\wedge_6$; since $\wedge_6$ is type
$\m^4$, $\wedge_5$ is type $\rt_4$.  A
caret of type \rtp\ can be seen in $T_-$: $\wedge_4$
has as its immediate successor $\wedge_5$, which is not
a right caret, and the leftmost child successor of
$\wedge_4$ is $\wedge_8$, which is type \rt, so
$\wedge_4$ is type \rtp.
\item \minot.  This is a middle caret of type \mi\ that has
    no child successor carets.  For example, in Figure
    \ref{exampletree}, the only carets of type \minot\ for
    some $i\in\{1,...,p\}$ are: $\wedge_3\in T_-$ is type
    $\m^1_\emptyset$, $\wedge_6, \wedge_7\in T_-$ are type
    $\m^3_\emptyset$, $\wedge_2\in T_+$ is type
    $\m^p_\emptyset$, $\wedge_1, \wedge_3\in T_+$ are type
    $\m^2_\emptyset$, $\wedge_4\in T_+$ is type
    $\m^1_\emptyset$, $\wedge_8\in T_+$ is type
    $\m^3_\emptyset$. \item \mij. This is a middle caret of
    type \mi\ with leftmost child successor of type \mj
    (where we will always have $j\leq\ i$).  For example,
    in Figure \ref{exampletree}, $\wedge_5\in T_-$ is type
    $\m^p_3$, $\wedge_6\in T_+$ is type $\m^4_3$, and
    $\wedge_7\in T_+$ is type $\m^3_3$.
\end{enumerate}

\subsection{The metric}
\par We now describe the metric developed by Fordham in \cite
{length} for geodesic length
in $F(p+1)$ with respect to $X$.
According to this metric, each caret pair in the minimal
tree-pair diagram representative of an element of $F(p+1)$
contributes a ``weight" which, when summed over all caret pairs
in the diagram, yields the length of the element in $F(p+1)$.

\begin{nota}[$|w|$]\label{lengthsym}
For given $w\in F(p+1)$, $|w|$
is the length of $w$ w.r.t. $X$.
\end{nota}

\par The {\it weight} of a caret pair in a minimal tree-pair
diagram representing $w\in F(p+1)$ is the
contribution of that caret pair to the length of $w$ (see Table \ref{weighttable}).  The weight depends
upon the type of each caret in the pair and is derived from the
cardinality of the set of
generators which is required to produce the caret pair.

\begin{nota}[$w_{(T_-,T_+)}(\wedge_i)$,
$w_{(T_-,T_+)}(\tau_1,\tau_2)$]\label{weightsym}
If the types of the negative and positive carets in the $ith$
caret pair of $(T_-,T_+)$ are denoted by $\tau_1$ and $\tau_2$
respectively, then we denote the weight of $\wedge_i$ by
$w_{(T_-,T_+)}(\wedge_i)$ or $w_{(T_-,T_+)}(\tau_1,\tau_2)$.
When the tree-pair diagram itself is obvious from the context,
we will often omit the subscript.
\end{nota}

\begin{rmk}
Since Table \ref{weighttable} is symmetric,
$w(\tau_1,\tau_2)=w(\tau_2,\tau_1)$ for all $\tau_1, \tau_2$.
\end{rmk}

\begin{thm}[Fordham \cite {length}, Theorem 2.0.11]
\label{lengththm}Given an element $w=(T_-, T_+)$ in $F(p+1)$,
$|w|$ is the sum of the weights given in Table
\ref{weighttable} for each of the pairs
of carets in $(T_-, T_+)$.  (Note that since only $\wedge_0$ is of type \lftnot, (\lftnot,\lftnot) is the
only possible pairing.)
\end{thm}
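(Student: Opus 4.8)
The plan is to prove the formula by establishing two matching inequalities. Write $\ell(w)$ for the sum over all caret pairs of $(T_-,T_+)$ of the weights in Table \ref{weighttable}; note that these weights are non-negative integers, so $\ell$ is integer-valued. I would show $\ell(w)\le|w|$ and $|w|\le\ell(w)$ separately.

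For the lower bound $\ell(w)\le|w|$, the central step is to show that $\ell$ is $1$-Lipschitz with respect to $X$, i.e.\ $|\ell(wg)-\ell(w)|\le 1$ for every $g\in X\cup X^{-1}$. Each generator $x_0,\dots,x_p$ is represented by a fixed tree-pair diagram with few carets (Figure \ref{geninf}), so carrying out the caret-addition procedure for multiplication described above and then reducing to the minimal representative only alters the trees in a bounded region near their right edges: it inserts or deletes a bounded number of carets and reclassifies the types of finitely many carets adjacent to the change. One then checks, against the weight table, that any such local modification perturbs the total weight by at most $1$. Granting this and noting $\ell(\mathrm{id})=0$, an induction along any geodesic $\mathrm{id}=w_0,w_1,\dots,w_{|w|}=w$ gives $\ell(w_{i+1})\le\ell(w_i)+1$, hence $\ell(w)\le|w|$.

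For the upper bound $|w|\le\ell(w)$, it suffices to show that every non-identity element admits a \emph{descending} generator, i.e.\ some $g\in X\cup X^{-1}$ with $\ell(wg)<\ell(w)$; by the Lipschitz bound and integrality this forces $\ell(wg)=\ell(w)-1$, and induction on $\ell(w)$ then yields $|w|\le|wg|+1\le\ell(wg)+1=\ell(w)$. To produce such a $g$, I would inspect the caret pair of largest index (which lies toward the right edge of each tree) and read off from its pair of types a generator whose right-multiplication strips or simplifies that caret pair and lowers its weight contribution; the relation $x_ix_j=x_{j+p}x_i$ records how the indices of the surviving carets shift. This descent is the constructive mirror of the Lipschitz analysis, and iterating it writes $w$ as a product of exactly $\ell(w)$ generators.

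The main obstacle is the case analysis behind the Lipschitz estimate $|\ell(wg)-\ell(w)|\le 1$. After reduction, a single multiplication can simultaneously reclassify several neighboring carets --- collapsing a run of \rt-type carets, or reassigning the subscript of an \mij\ caret once its leftmost child successor changes type --- so one must verify that the \emph{net} change in the weighted sum never exceeds $1$ across every combination of the two caret types in the affected pair and every $x_i^{\pm1}$. The weight table is engineered precisely so that these competing local gains and losses always cancel to within $1$; confirming this requires systematically working through the type-transition rules encoded in Figures \ref{type} and \ref{type2}.
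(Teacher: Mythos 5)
First, a point of context: the paper gives no proof of this statement at all --- it is Fordham's Theorem 2.0.11, imported by citation, and the only ingredients of it the paper ever uses are the consequences recorded as Theorems \ref{onecaret} and \ref{addcaret}. So there is no ``paper's own proof'' to compare against; the relevant comparison is with Fordham's argument, and your skeleton does reproduce its structure: define the weight sum $\ell$, show it changes by at most $1$ under right multiplication by a generator (giving $\ell(w)\le|w|$ along a geodesic), and show every nonidentity element admits a generator strictly decreasing $\ell$ (giving $|w|\le\ell(w)$ by induction). Granting those two lemmas, your deductions are correct.

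However, both lemmas are asserted on grounds that do not hold up, and this is where the real content lives. For the Lipschitz step, your justification is locality: multiplication ``only alters the trees in a bounded region near their right edges'' and ``reclassifies the types of finitely many carets adjacent to the change.'' This is false, because the caret types are not locally determined: whether a right caret is \rtnot, \rtr, or \rtj\ depends on \emph{all} of its successors, and the subscript of \mij\ depends on its leftmost child successor. Concretely, take $w=x_0^n$ in $F(3)$, so $T_-$ is a bare left chain and $T_+$ a bare right chain of $n+1$ ternary carets. Multiplying by $x_1$ violates the subtree condition, and the caret that must be added at the root's middle leaf of $T_-$ (leaf number $2n+1$) forces a caret to be added at the leaf numbered $2n+1$ of $T_+$, which is the \emph{middle} leaf of the bottom caret of the right chain; that single addition is a non-right successor of every chain caret, so all $n$ of them flip from \rtnot\ to \rtr\ (the bottom one to \rtone) at once. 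The net weight change is still $+1$, but only because the table is rigged so that such cascades are weight-neutral --- e.g.\ $(\lftl,\rtnot)$ and $(\lftl,\rtr)$ both have weight $1$ --- and verifying this in all configurations is exactly the global caret-pair bookkeeping constituting Fordham's Theorems 2.1.1, 2.1.3, 2.1.4; it cannot be obtained from a bounded-perturbation argument. For the descent step, your recipe ``inspect the caret pair of largest index'' fails on the simplest examples: $x_1$ itself has highest-index pair $(\rtnot,\rtnot)$, of weight $0$, which identifies no generator; the reducing generator $x_1^{-1}$ is visible only at the interior pair $(\m^1_\emptyset,\rtnot)$, i.e.\ at designated structural carets (the children of the root along the right edge of $T_-$), which is where Fordham's case analysis actually takes place. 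You would also need to close the induction by showing $\ell(w)=0$ forces $w=\mathrm{id}$ (both trees must then be bare right chains of equal size, hence equal). As written, the proposal defers precisely the case analysis that \emph{is} the theorem, and the two heuristics offered in its place are each contradicted by small examples.
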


\begin {table}[t]
\centering \caption{Weight of types of caret pairs in a
$(p+1)$--ary tree-pair diagram:}
\begin{tabular}{clcccclll}
  \hline
  $(\hbox{ },\hbox{ })$ & \vline & \lftnot & \lftl\ & \rtnot\ &
\rtr\ & \rtj\ & \mlnot\ & $\m^t_u$ \\
  \hline
  \lftnot\ & \vline & 0 & -- & -- & -- & -- & -- & -- \\
  \lftl\ & \vline & -- & 2 & 1 & 1 & 1 & 2 & 2 \\
  \rtnot\ & \vline & -- & 1 & 0 & 2 & 2 & 1 & 3 \\
  \rtr\ & \vline & -- & 1 & 2 & 2 & 2 & 1 & 3 \\
  \rti\ & \vline & -- & 1 & 2 & 2 & 2 & $\displaystyle^{1
\hbox{ for } i\le l}_{3 \hbox{ for } i>l}$ & 3 \\
  \mknot\ & \vline & -- & 2 & 1 & 1 & $\displaystyle^{1 \hbox{
for } j\le k}_{3 \hbox{ for } j>k}$ & 2 & $\displaystyle^{2
\hbox{ for } k\le u}_{4 \hbox{ for } k>u}$ \\
  \mrs\ & \vline & -- & 2 & 3 & 3 & 3 & $\displaystyle^{2
\hbox{ for } l\le s}_{4 \hbox{ for } l>s}$ & 4 \\
  \hline
\end{tabular}
\label{weighttable}
\end{table}

\subsection{How generators act on caret type
pairings}\label{action}
\par Our approach in this paper involves thinking of
multiplication on the right by a generator as an ``action" on a
tree-pair diagram.  When we multiply $x=(T_-,T_+)$ of $F(p+1)$
on the right by $y$, we view $((Ty)_-, (Ty)_+$ as the results
of this ``action" of $y$ on $(T_-,T_+)$.  Diagrams depicting
this ``action" of $g\in X\cup X^{-1}$ on an arbitrary $S_-$ can
be seen in Figure \ref{genaction}.

\begin{figure}[t]
\centering
\includegraphics[width=3.5in]{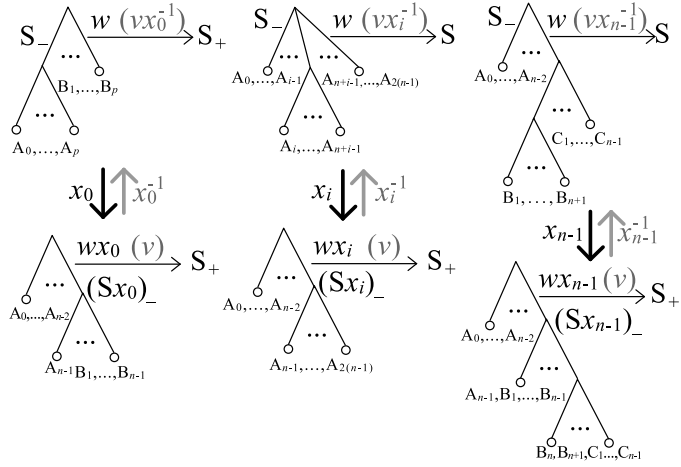}
\caption{The ``action" of given $g\in X\cup X^{-1}$ on
an arbitrary $(p+1)$--ary tree-pair diagram, where we assume
that the tree-pair diagram $(S_-,S_+)$ has already had any
carets added which are needed in order to compute the product.
Black arrows/labels indicate the ``action" of $g$ on the tree-pair
diagram representative of an arbitrary word $w$, and grey
arrows/labels indicate the ``action" of $g^{-1}$ on the tree-pair
diagram representative of an arbitrary word $v$ (Here
$i\in\{1,...,p-1\}$).  Because multiplication on the right has
no effect on the positive tree of a tree-pair diagram after all
carets have been added for multiplication,
the ``action" makes no change to the positive trees (see
Remark \ref{noaddedcarets}).  } \label{genaction}
\end{figure}

\par We now define two conditions which will be used in the
theorems that follow.

\begin{defn}[subtree condition]\label{subcond}
For  fixed $w=(T_-,T_+)\in F(p+1)$, $g\in X\cup
X^{-1}$, $w$ and $g$ fulfil the subtree condition
when $wg$ can be computed without adding carets.
\end{defn}

\begin{defn}[minimality condition]\label{mincond}
For fixed $w=(T_-,T_+)\in F(p+1)$, $g\in X\cup
X^{-1}$, $w$ and $g$ fulfil the minimality
condition when $((Tg)_-,(Tg)_+)$ is minimal.
\end{defn}

\par Fordham proves that when these two conditions are met,
only one caret pair in the tree-pair diagram changes type as a
result of the ``action" of $g$:

\begin{thm}[Fordham \cite{length}, Theorem
2.1.1]\label{onecaret}
If $w=(T_-,T_+)\in F(p+1)$ and $g\in X\cup X^{-1}$ satisfy the
subtree and minimality conditions, then there is exactly one
caret $\wedge _i$ in the tree-pair diagram that changes type
under the multiplication $wg${\rm ;} that is, if we let $\tau
_{T_-}(\wedge_i)$ denote the caret type of $\wedge_i$ in $T_-$
in the tree-pair diagram $(T_-,T_+)$, then $\exists\ i$ such
that
$$\tau _{T_-}(\wedge _i)\ne \tau _{(Tg)_-}(\wedge _i)
\hbox{ and } \tau _{T_-}(\wedge _j)=\tau _{(Tg)_-}(\wedge _j)
\forall\ j\ne i$$
\end{thm}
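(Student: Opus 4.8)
The plan is to prove the statement by a direct analysis of the local modification that each generator induces on the negative tree, organized as a case analysis over $g\in X\cup X^{-1}$. First I would record the reduction that makes this feasible: since $g$ and $g^{-1}$ act as mutually inverse modifications of a tree-pair diagram, once we know that multiplication by $g$ changes the type of a single caret we know that multiplication by $g^{-1}$ simply undoes that change, so it suffices to treat the ``positive'' generators $\x{0},\x{1},\dots,\x{p}$; moreover $\x{1},\dots,\x{p-1}$ are structurally identical up to the middle-subtype index, so there are really only three shapes to analyze, namely $\x{0}$, a generic $\x{i}$ with $1\le i\le p-1$, and $\x{p}$. For each, the subtree condition (Definition \ref{subcond}) guarantees that no carets are added to $(T_-,T_+)$, so $wg$ is obtained by performing on $T_-$ exactly the local rearrangement pictured in Figure \ref{genaction}, while $T_+$ is left untouched; in particular the total number of carets is unchanged and the caret pairing is inherited from $T_+$ outside the small ``core'' region where the rearrangement occurs.

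Next I would identify, in each of the three cases, the unique caret $\wedge_i$ whose geometric position relative to its parent is altered by the move. Reading off Figure \ref{genaction}, the rearrangement is a single rotation that slides one caret from one child-slot of its parent into another (or changes which caret is its parent), so exactly one caret changes its \lft/\rt/\mi\ classification. Using the position-to-type assignment of Figures \ref{type} and \ref{type2}, I would then verify directly that this caret's type does indeed change, pinning down the index $i$ asserted in the theorem.

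The heart of the argument, and the step I expect to be the main obstacle, is showing that \emph{no other} caret changes type. There are two things to check. The first is easy: every caret outside the core region keeps its position relative to its parent, and since a rotation preserves the left-to-right order of the leaves it also preserves all ancestor/descendant relationships away from the rotated caret, so the raw \lft/\rt/\mi\ type of every other caret is unchanged. The genuinely delicate point is the refined classification of Section \ref{alltypes}, because the types \rtj, \minot, and \mij\ of a caret are determined by its \emph{leftmost child successor}, and a priori a local rotation could perturb the caret ordering enough to change which caret is the leftmost child successor of a neighbor, or to change that successor's type. I would handle this by showing that the rotation alters the global node order only by moving $\wedge_i$ relative to a single adjacent caret, and then checking, case by case against the successor-based entries of the type definitions, that this single transposition leaves the leftmost-child-successor relationship — and the type of that successor — intact for every caret other than $\wedge_i$. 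Here the minimality condition (Definition \ref{mincond}) is essential: it rules out the cancellation of an exposed caret pair, which is the only phenomenon that could couple the local rotation to a larger reindexing and thereby propagate a type change beyond the single caret $\wedge_i$.
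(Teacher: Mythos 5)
There is nothing in the paper to compare your proposal against: Theorem \ref{onecaret} is not proved in this paper at all. It is imported verbatim, with attribution, as Theorem 2.1.1 of Fordham's paper \cite{length}, and the present paper simply uses it (together with Remark \ref{noaddedcarets} and Theorem \ref{addcaret}) as a black box to run its own length computations. So the only meaningful review I can give is of your sketch as a free-standing reconstruction of Fordham's argument.

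As such a reconstruction, the overall shape (case analysis over $x_0$, a generic $x_i$, and $x_p$; locality of the move; separate treatment of raw \lft/\rt/\m\ classification and of the refined classification of Section \ref{alltypes}) is the right kind of argument, but two of your steps hide the actual crux. First, you assert that the move ``preserves the left-to-right order of the leaves'' and hence perturbs the caret order ``only by moving $\wedge_i$ relative to a single adjacent caret.'' In the $(p+1)$--ary setting the node order is \emph{not} plain infix order: by Figure \ref{type2} a caret of type \mi\ is preceded by $p-i+1$ of its children and followed by $i$ of them, while \lft\ and \rt\ carets are preceded by exactly one child. Since the statement of the theorem compares $\tau_{T_-}(\wedge_j)$ with $\tau_{(Tg)_-}(\wedge_j)$ \emph{by index}, the whole content of the theorem is that the generator's rearrangement interacts with this type-dependent ordering so as to preserve every caret's index; this is exactly what must be verified against the precise shapes of the generators in Figure \ref{geninf}, and it is not a generic property of ``rotations'' (for an arbitrary local rearrangement of a $(p+1)$--ary tree, the repositioned caret can jump past an entire block of carets, shifting many indices at once). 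Second, your argument only tracks the type of the rotated caret, but the subtype \mi\ of a middle caret is determined by its \emph{parent's} type and its position below that parent (Figure \ref{type}); since the rotation changes the type of one caret and redistributes children between two carets, the children of both affected carets need an explicit check that their subtypes survive. Finally, your stated role for the minimality condition (blocking a ``larger reindexing'') is not its actual function: cancellation of an exposed pair would not reindex $(Tg)_-$ itself, but would make $((Tg)_-,(Tg)_+)$ a non-minimal representative, so that type and weight statements about it would no longer compute $|wg|$; that is why Fordham's Theorem \ref{addcaret} handles that case separately with the conclusion $|wg|=|w|-1$.
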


The caret $\wedge_i$ which changes type when the conditions of
Theorem \ref{onecaret} are met will always be in the negative
tree:

\begin{rmk} \label{noaddedcarets}
When multiplying an element $x=(T_-,T_+)$ in $F(p+1)$ by an
element $y$ on the right, if the subtree condition is met, then
the type of caret $\wedge_i$ is the same in both $T_+$ and
$(Ty)_+$ for all caret index numbers $i$.  The type of
$\wedge_i$ will be different in $(Ty)_+'$ than in $T_+$ only if
the minimality condition is not met.
\end{rmk}

When either the subtree or minimality condition fails, we have
an alternate theorem which can help us to determine the effect
of multiplication on an element's length without computing it
directly:

\begin{thm}[Fordham \cite {length}, Theorems 2.1.3 and
2.14]\label{addcaret}  If $g\in X\cup X^{-1}$ and
$w=(T_-,T_+)\in F(p+1)$, do not fulfil:
\begin{enumerate}
\item the subtree condition when computing $wg$, then
$|wg|>|w|$. \item the minimality condition when
computing $wg$, then $|wg|=|w|-1$.
\end{enumerate}
\end{thm}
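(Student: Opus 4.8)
The plan is to reduce both implications to a finite verification driven by Fordham's length formula (Theorem~\ref{lengththm}), which writes $|w|$ as the sum over the caret pairs of $(T_-,T_+)$ of the weights in Table~\ref{weighttable}. Because length is additive over caret pairs, any change in length under right multiplication by $g$ can only come from two sources: a change in the total number of caret pairs, and a change in the type (hence weight) of individual caret pairs. I would drive the whole argument from the explicit action diagrams in Figure~\ref{genaction}, which let me read off, for each $g\in X\cup X^{-1}$ and each local configuration, exactly which caret is added or removed and how its neighbors' types are affected. A preliminary step is to check from these diagrams that the subtree condition (Definition~\ref{subcond}) and the minimality condition (Definition~\ref{mincond}) cannot both fail for a single generator; together with Theorem~\ref{onecaret} (the case where both hold) this partitions the analysis into exactly the two regimes of the statement.

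For part~(1), suppose the subtree condition fails. Then by Definition~\ref{subcond} the product $wg$ cannot be formed without adding a caret to $T_-$, and Figure~\ref{genaction} shows that exactly one caret is forced, so $((Tg)_-,(Tg)_+)$ has one more caret pair than $(T_-,T_+)$. I would first verify that this forced diagram is already minimal---the generator action introduces no cancelling exposed pair precisely when a caret is required---so that $|wg|$ is genuinely computed from a diagram with one extra pair. The new caret is exposed in $(Tg)_-$, and inspection of Table~\ref{weighttable} shows that every admissible pairing involving such a caret has weight at least $1$. The remaining work is to confirm that the type changes induced on its neighbors cannot cancel this gain, yielding the strict inequality $|wg|>|w|$.

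For part~(2), suppose the minimality condition fails. By the preliminary step the subtree condition then holds, so no caret is added and, by Remark~\ref{noaddedcarets}, the positive tree is unchanged: $(Tg)_+=T_+$. Since $((Tg)_-,(Tg)_+)$ is non-minimal, it contains an exposed caret pair with matching leaf indices, and I would show from Figure~\ref{genaction} that exactly one such pair is created. Cancelling it produces the minimal representative of $wg$ with one fewer caret pair than $(T_-,T_+)$. It then remains to compute the net weight change: the weight of the removed pair together with the type changes it forces on its (at most two) neighbors, and to verify against Table~\ref{weighttable} that across all the relevant configurations this sums to exactly $-1$, so that $|wg|=|w|-1$.

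The hard part in both cases is the same, and it is bookkeeping rather than structure: removing or adding a single caret changes more than its own contribution, because the refined types in Table~\ref{weighttable}---such as \rtnot, \rtr, \rtj, \minot, and \mij---depend on successor/child relationships and on which carets lie rightmost, and these can shift when a neighbor appears or disappears. The genuine content of the theorem is therefore to track these induced type changes and show that in part~(1) they never offset the forced $+1$, while in part~(2) they combine with the weight of the removed pair to total exactly $-1$. I expect Theorem~\ref{onecaret} to be the right organizing tool here: it pins down the single-caret-change behavior in the well-behaved regime, and since the two failure modes are exactly the two ways its hypotheses can break, each failure configuration can be verified by comparison against the behavior it guarantees.
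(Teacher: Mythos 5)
First, a point of comparison: the paper never proves this theorem at all---it is imported verbatim from Fordham's metric paper \cite{length}---so your proposal cannot be measured against an internal argument; it has to stand on its own. As written, it does not, because its accounting rests on applying Theorem~\ref{lengththm} to non-minimal diagrams. The weight formula computes $|w|$ only from the \emph{minimal} representative, but when the subtree condition fails you must first pass to an augmented diagram of $w$, and that diagram's weight-sum generally exceeds $|w|$. Concretely, take $w=x_0$ and $g=x_p$. Since $x_p$ requires both $\wedge_E$ and $\wedge_D$ of Figure~\ref{negtree} to be present in $T_-$, \emph{two} carets must be added, not ``exactly one'' (for $w$ the identity it would be three, since $x_p$ has three carets per tree). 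The augmented diagram of $x_0$ has caret pairs $(\lftnot,\lftnot)$, $(\lftl,\rtp)$, $(\mpnot,\mpnot)$, $(\rtnot,\rtnot)$, with weight-sum $0+1+2+0=3\neq 1=|x_0|$. Moreover one of the added pairs is $(\rtnot,\rtnot)$, of weight $0$, which falsifies your claim that every admissible pairing of an added caret has weight at least $1$. So in this example there is no forced ``$+1$'' to protect: the conclusion $|wx_p|=2>1$ emerges from recomputing all types and weights in the minimal diagram of $wx_p$, not from a local gain at the new caret that neighbors merely fail to cancel.

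A second gap afflicts both parts: the induced type changes are not confined to ``neighbors.'' The refined types in Table~\ref{weighttable} are defined by successor conditions ranging over the whole tree---\rtnot\ requires \emph{all} successors to be right carets, and the subscripts in \rtj\ and \mij\ are read off the leftmost child successor---so a single caret insertion or cancellation can change the types of arbitrarily many carets at once. For instance, adding one middle caret at the bottom of a right spine in $T_+$ (the caret added to $T_+$ lands wherever the matching leaf index falls) converts every \rtnot\ caret above it to \rtr\ or \rtj, and some such conversions carry nonzero weight deltas (compare $w(\rtnot,\rtnot)=0$ with $w(\rtnot,\rtr)=2$). In part (2) you likewise assert, without justification, that exactly one exposed pair is created and that cancelling it already yields the minimal diagram; ruling out cascading cancellations (or invoking parity of relator lengths, which you never do) is part of the content of the theorem, as is your ``preliminary step'' that the two conditions cannot fail simultaneously. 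The overall flavor---a finite case analysis against Table~\ref{weighttable} organized by Figure~\ref{genaction}---is indeed how such results are proved, but your two reductions fail on the very first nontrivial examples, so the plan needs to be rebuilt around genuine minimal-diagram-to-minimal-diagram comparisons before any finite verification can begin.
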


\section{Seesaw words with arbitrary swing exist in $F(p+1)$}
\subsection{Seesaw words in $F(p+1)$}
\begin{thm}\label{seesawNF}
Any word in $F(p+1)$ with the following normal form, where
$m,n\in\mathbb{N}$ is a seesaw word with respect to $x_0$ in
$X$.

\begin{eqnarray*}
x_0^{m-1}x_px_{np^2+(m+n)p}\left(\overset{pn} {\underset{i=1}
\prod} x^{-1}_{np^2+(m+n-i+1)p-i}\right) x_0^{-m}
\end{eqnarray*}

The minimal tree-pair diagram representative of an element of
this form can be seen in Figure \ref{seesawfig}.  This family of seesaw words will be denoted ${\mathcal
S}$.
\end{thm}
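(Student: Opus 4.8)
The plan is to verify directly the two defining conditions of Definition \ref{seesawword}, taking $g=\x{0}$ and computing every length with Fordham's metric (Theorem \ref{lengththm}). First I would record the minimal tree-pair diagram $(S_-,S_+)$ of $w$ from Figure \ref{seesawfig}, read off the type of each caret pair, and sum the entries of Table \ref{weighttable} to obtain a closed form for $|w|$ in terms of $m$, $n$ and $p$. The same bookkeeping pins down the caret types near the right edge of $S_-$ and near the root of $S_+$, and these are precisely the data that the right-multiplication ``action'' of Figure \ref{genaction} disturbs; they drive the rest of the argument.

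For condition (1) I would track $|w\x{0}^l|$ as $l$ runs away from $0$ in both directions. Right multiplication by \xpm{0} is governed by the action of Figure \ref{genaction}, which modifies the negative tree (cf.\ Remark \ref{noaddedcarets}), so at each step I can apply the dichotomy of Theorem \ref{addcaret}: along each arm I would show that the subtree condition holds while the minimality condition fails, forcing $|w\x{0}^{l}|=|w\x{0}^{l\mp1}|-1$ by part (2) of that theorem (equivalently, in the steps where both conditions hold, I would use Theorem \ref{onecaret} to locate the single caret whose type changes and check in Table \ref{weighttable} that its weight drops by exactly $1$). Iterating, and observing that this reducing behaviour persists exactly until the trailing \x{0}-power and the middle block $x_p x_{np^2+(m+n)p}\prod x^{-1}$ begin to interact, pins down the swing $k$ and yields $|w\x{0}^l|=|w|-|l|$ for $0<|l|\le k$. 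Since $k$ will be expressed in terms of $m$ and $n$, letting these parameters grow produces seesaw words of arbitrarily large swing.

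Condition (2) is where the real work lies. For each intermediate vertex $w\x{0}^l$ with $0<|l|<k$ and each generator $h\in X\cup X^{-1}$ other than the one continuing the current descent, I must show $|w\x{0}^l h|\ge|w\x{0}^l|$. The argument again splits on Theorem \ref{addcaret}: if $(w\x{0}^l,h)$ fails the subtree condition, part (1) already gives a strict increase and there is nothing more to check; otherwise the subtree condition holds, Theorem \ref{onecaret} tells me that exactly one caret $\wedge_i$ in the negative tree changes type, and I would read off from Table \ref{weighttable} that the new weight of $\wedge_i$ is at least its old weight. This has to be carried out for every generator $h=x_1,\dots,x_p,x_1^{-1},\dots,x_p^{-1}$ (and for the non-descending power of \x{0}, where the increase is immediate) and for the caret configuration that $S_-$ presents at each $l$.

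The main obstacle is precisely this last case analysis: as $l$ varies the diagram of $w\x{0}^l$ ``tilts,'' so the caret types exposed near the right edge of the negative tree shift, and I would need to show that in every one of these configurations no single admissible caret-type change lowers the weight. Organising the generators by how their action in Figure \ref{genaction} meets the right edge of $S_-$, and exploiting the symmetry of Table \ref{weighttable}, should keep the bookkeeping finite and uniform in $p$; particular care is needed at the extreme vertices $l=\pm k$, where the descent terminates and the relevant caret finally changes into a type whose weight no longer decreases.
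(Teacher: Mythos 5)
Your proposal takes essentially the same route as the paper's own proof: the paper establishes condition (1) in Lemma \ref{shrinking} by induction on the explicit tree-pair diagrams of $wx_0^{\pm q}$, and condition (2) in Lemma \ref{wx0pmg} by exactly the case analysis you describe (either the subtree condition fails and Theorem \ref{addcaret} forces an increase, or both conditions hold and the single caret pair that changes type gains weight in Table \ref{weighttable}). One small correction: in the descending steps the paper verifies that the minimality condition \emph{holds} rather than fails, so the decrease of exactly $1$ comes from the changed pairing (e.g.\ $(\rtone,\rtr)\to(\lftl,\rtr)$, weight $2\to1$), which is precisely the alternative you listed as the ``equivalent'' case.
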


\begin{figure}[t]
\centering
\includegraphics[width=\maxwidth]{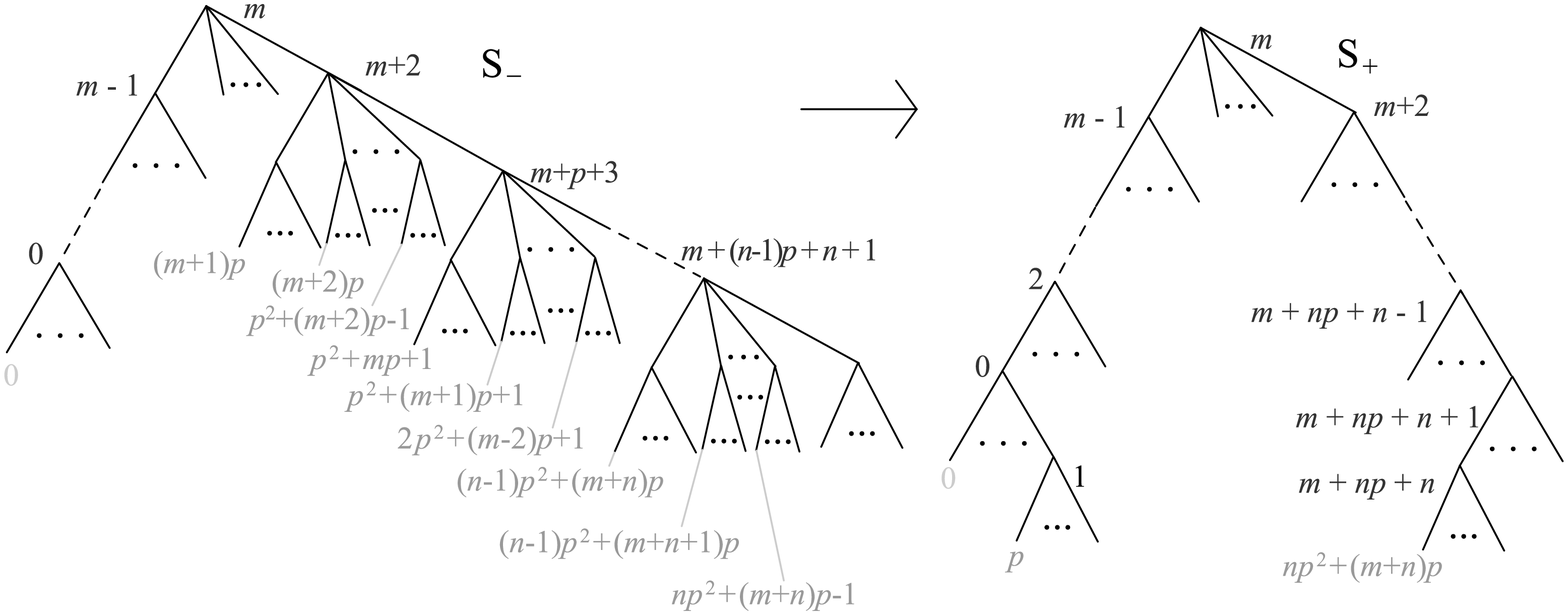}
\caption{Minimal tree-pair diagram representative of an
arbitrary seesaw element in the family ${\mathcal
S}$.  The letter $m$ denotes the
number of carets of type \lftl\ in $S_-$ and the letter $n$
denotes the number of carets of type \rt\ on the right side of
$S_-$ which are not of type \rtnot.
}\label{seesawfig}
\end{figure}

\par The proofs that follow will be concerned entirely with
showing that all elements with minimal tree-pair diagram
representative of the form given in Figure \ref{seesawfig} are
seesaw words with respect to $x_0$.  The algebraic expression
is entirely determined by the minimal tree-pair diagram; to see
how this algebraic expression can be obtained from the
tree-pair diagram given in Figure \ref{seesawfig}, see the
section on normal forms of $F(p+1)$ in \cite{notMAC}.  This family ${\mathcal S}$ is a generalization of the family of
seesaw words introduced by Cleary and Taback in \cite{seesaw}.

\par For our proof, we take arbitrary $w=(S_-,S_+)\in{\mathcal S}$.  First we prove
that $w$ satisfies part 1 of the definition of seesaw words with respect to
$x_0\in X$.

\begin{lem}\label{shrinking}
$$|wx_0^{\pm q}|=|w|- q\hbox{ for all }q\hbox{ such that
}0<q<m-1,n-1$$
where $m$ denotes the number of carets of type
\lftl\ in $S_-$ and $n$ denotes the number of carets
of type \rt\ on the right side of $S_-$ which are not type
\rtnot.
\end{lem}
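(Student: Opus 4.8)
The plan is to prove the statement by induction, reducing it to the two one-step claims $|wx_0^{\,j+1}| = |wx_0^{\,j}| - 1$ and $|wx_0^{-(j+1)}| = |wx_0^{-j}| - 1$ for each $j$ in the relevant range. Once these are established, the equality $|wx_0^{\pm q}| = |w| - q$ follows by telescoping from the base value $|wx_0^{0}| = |w|$, provided every intermediate step lies within range. The real content is therefore to understand the ``action'' of $x_0^{\pm 1}$ on the negative tree $S_-$ at each stage and to show each such action lowers the Fordham length (Theorem \ref{lengththm}) by exactly one, while leaving behind a tree whose structure still supports the next step (so that the inductive invariant is maintained).

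First I would pin down this action using Figure \ref{genaction}: right-multiplication by $x_0^{\pm 1}$ modifies $S_-$ near the root and left spine, and by Remark \ref{noaddedcarets} it leaves all caret types of the positive tree $S_+$ unchanged as long as no carets must be added. Consequently the positive caret in every caret pair stays fixed throughout the induction, and I only need to track how the negative caret types evolve. The key structural claim is that, so long as $q$ stays below $m-1$ in the $x_0$ direction and below $n-1$ in the $x_0^{-1}$ direction, the subtree condition of Definition \ref{subcond} holds at every step, i.e. no carets ever need to be added: there are enough \lftl\ carets on the left spine of $S_-$, and enough non-\rtnot\ right carets on its right side, to absorb the rotation without running out.

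Granting the subtree condition, I would then check the minimality condition at each step. Where minimality fails, Theorem \ref{addcaret}(2) gives a drop of exactly one immediately. Where it holds, Theorem \ref{onecaret} guarantees that exactly one caret $\wedge_i$ changes type, necessarily in $S_-$ by Remark \ref{noaddedcarets}, so the length change is just the difference of two entries of Table \ref{weighttable} in the row or column of the fixed positive type. For the $x_0$ direction I expect the affected caret to sweep down the left spine, each step converting an \lftl-type negative caret into one whose pairing weighs one less; this is why the $+q$ range is governed by the count $m$ of \lftl\ carets. For the $x_0^{-1}$ direction the rotation instead reaches the right carets, each step lowering the weight contributed by a non-\rtnot\ right caret, so that the $-q$ range is governed by $n$. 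Demanding $q$ below both $m-1$ and $n-1$ keeps both ends of the tree from being exhausted, so both families of one-step drops remain valid and the induction closes, regardless of the precise correspondence between direction and bound.

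I expect the main obstacle to be exactly this bookkeeping: verifying that the subtree condition really does persist up to the stated bound --- since any forced caret addition would, by Theorem \ref{addcaret}(1), instead increase the length and destroy the linear pattern --- and confirming that the single caret which changes type always produces a Table \ref{weighttable} weight exactly one smaller than before. This amounts to a finite but delicate case analysis tracking how the types \lftl, \rtnot, \rtr, and \rtj\ transition into one another as the rotation sweeps across $S_-$, each transition checked against the symmetric weight table. The thresholds $q = m-1$ and $q = n-1$ are precisely where the relevant spine runs out and the subtree condition first fails, which is why the clean linear-decrease statement is confined to the open range.
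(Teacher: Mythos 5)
Your overall framework---induction on $q$, Remark \ref{noaddedcarets} to freeze the positive tree, Theorem \ref{onecaret} to isolate a single changing caret in $S_-$, and Table \ref{weighttable} to convert that type change into a length change---is exactly the paper's, and your direction/bound correspondence ($x_0$ consuming the $m$ carets of type \lftl, $x_0^{-1}$ consuming the $n$ right carets) is also correct. But there is a genuine error in where you locate the difficulty. You claim the thresholds $q=m-1$ and $q=n-1$ are ``precisely where the relevant spine runs out and the subtree condition first fails.'' That is false: the left spine of $S_-$ carries $m$ carets of type \lftl\ and the right side carries $n$ right carets besides the \rtnot\ caret $\wedge_{m+np+n}$, so the rotations can be performed without adding carets well past those thresholds; within the stated range the subtree condition is never the binding constraint.

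What the bounds actually buy---and what your plan never supplies---is control of the type of the caret in the \emph{positive} tree paired with the unique changing negative caret, which is what determines the sign of the weight change. For $x_0$, the changing negative caret $\wedge_{m-(j-1)}$ goes from \lftl\ to \rtr\ (or \rtp\ when $j=1$); this is a drop of $1$ only when its positive partner has type \lftl\ (weight $2\to 1$), whereas if the partner were, say, \rtnot\ or \rtr\ the same transition gives $1\to 2$, an \emph{increase}. The hypothesis $j\le m-2$ is exactly what guarantees $m-(j-1)>1$, so the partner is a predecessor of the root in $S_+$ other than $\wedge_0,\wedge_1$ and hence of type \lftl. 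Symmetrically, for $x_0^{-1}$ the bound $q<n-1$ guarantees $m+2+(j-1)(p+1)<m+np+n$, so the partner in $S_+$ has type \rtr\ and the transition $(\rtone,\rtr)\to(\lftl,\rtr)$ is $2\to 1$; with a partner of type \lftl\ or \minot\ the weight would instead increase. So the ``finite but delicate case analysis'' you defer, described as tracking transitions among the negative types \lftl, \rtnot, \rtr, \rtj, cannot close by itself: the essential step is pinning down the positive partner's type at every stage, and that is precisely where the bounds $m-1$ and $n-1$ enter. A smaller point: your fallback ``where minimality fails, Theorem \ref{addcaret}(2) gives a drop of one'' would cost you the structural inductive invariant---you would no longer know the form of the minimal diagram needed to identify the changing caret at the next step; the paper instead verifies that minimality holds at every step, so that fallback is never invoked.
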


\begin{proof}
\par We prove this by induction.  Throughout this proof, we let
$(S^q_-,S^q_+)$ denote $((Sx^{-q})_-,(Sx^{-q})_+)$ and we let
$(R^q_-,R^q_+)$ denote $((Sx^{q})_-,(Sx^{q})_+)$, where $q>0$
in both cases.  Our inductive hypothesis assumes that
$wx_0^{q}$ and $wx_0^{-q}$ have minimal tree-pair diagram
representatives of the form given in Figures \ref{wx0qinv} and
\ref{wx0q} respectively.

\begin{figure}[t]
\centering
\includegraphics[width=4in]{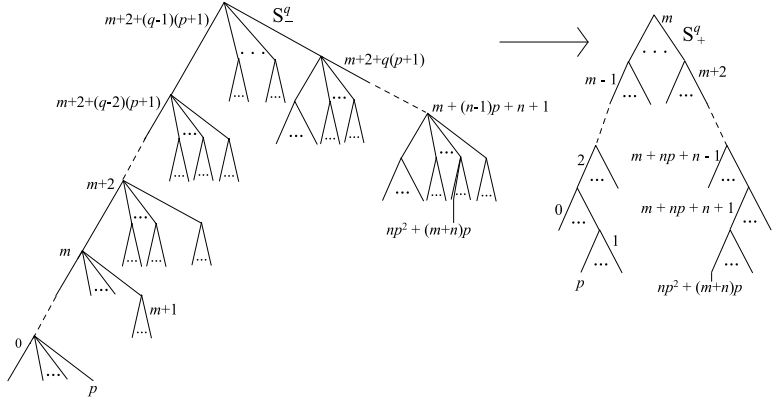}
\caption{Minimal tree-pair diagram representative of
$wx_0^{-q}$ (when $0<q<n-1$) for $w\in{\mathcal
S}$.}\label{wx0qinv}
\end{figure}

\begin{figure}[t]
\centering
\includegraphics[width=3.5in]{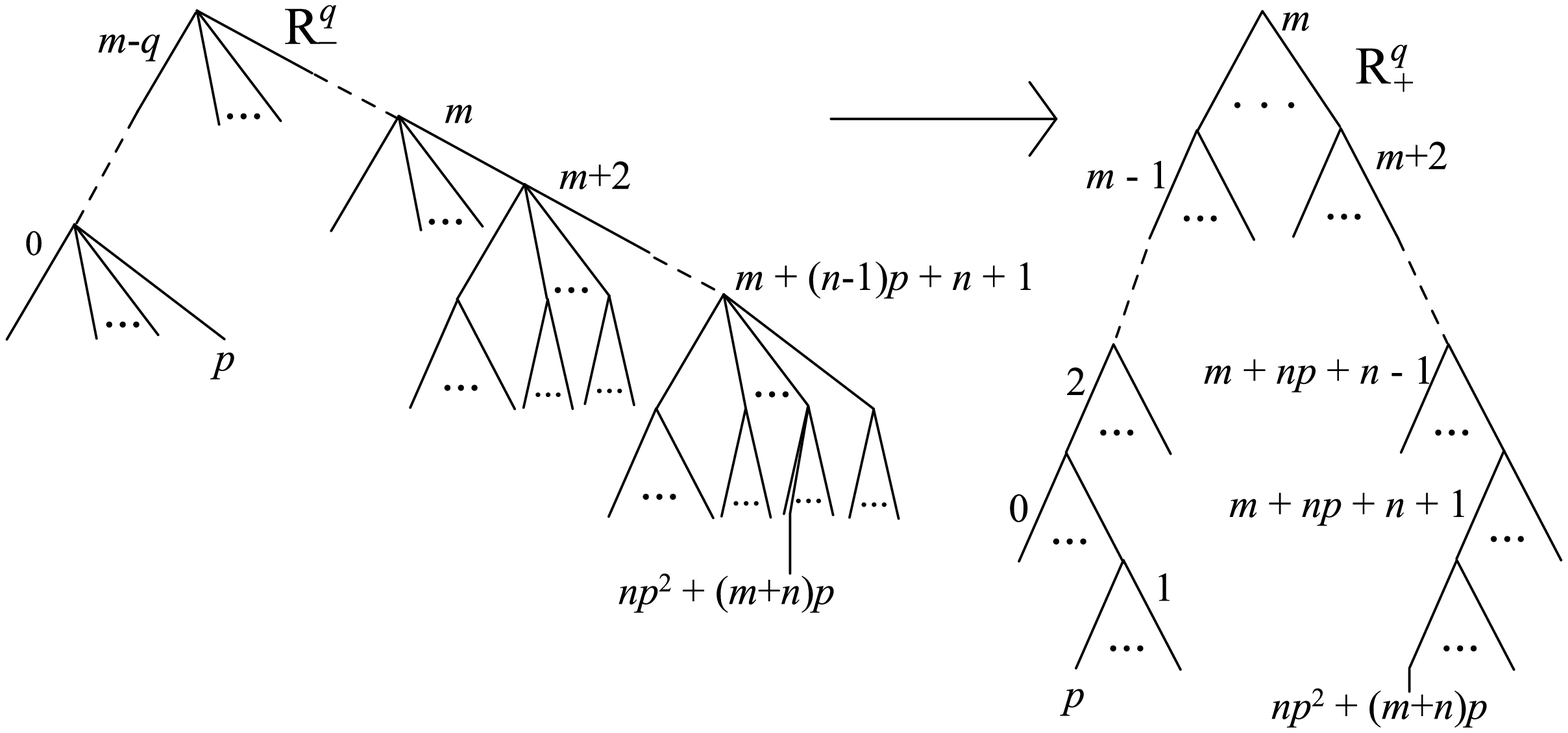}
\caption{Minimal tree-pair diagram representative of $wx_0^{q}$
(when $0\leq q<m-1$) for $w\in{\mathcal S}$.}\label{wx0q}
\end{figure}

\begin{enumerate}
\item $|wx_0^{-q}|$:  We begin by considering the case when
    $q=1$. Performing the multiplication $wx_0^{-1}$ using
    the minimal tree-pair diagram representatives of $w$
    and $x_0^{-1}$ in Figures \ref{seesawfig} and
    \ref{geninf} respectively, we obtain Figure
    \ref{wx0qinv} (when $q=1$); $(S^1_-,S^1_+)$ is minimal
    because there are only two exposed carets in $S^1_+$:
    the carets with leftmost leaf index numbers $p$ and
    $np^2+(m+n)p$, but neither of the leaves with these
    index numbers in $S^1_-$ is the leftmost leaf of an
    exposed caret.

    \par Our inductive hypothesis will be that
    $|wx_0^{-q}|=|w|-q$ for some $q$ such that
    $0<q<m-1,n-1$ and that $wx_0^{-q}$ has minimal
    tree-pair diagram representative $(S^q_-,S^q_+)$ (see
    Figure \ref{wx0qinv}).  Now we assume our hypotheses
    hold for some $q=j-1$
such that $0<j<n-2$ and we consider what happens when
we multiply $wx_0^{-(j-1)}$ by $x_0^{-1}$ on the right.
By our inductive hypothesis, the tree-pair diagram in
Figure \ref{wx0qinv} is the minimal
representative of $wx_0^{-(j-1)}$ when $q=j-1$.
Because $wx_0^{-(j-1)}$ and $x_0^{-1}$ satisfy the
subtree condition, the positive tree $S^{j-1}_+$
remains unchanged after multiplication by $x_0^{-1}$
(see Remark \ref{noaddedcarets}).  So we consider which
changes $x_0^{-1}$ makes to the
negative tree.

    \par By looking at Figures \ref{genaction} and
\ref{wx0qinv} which represent $wx_0^{-(j-1)}$ and
$x_0^{-1}$ respectively, we can see that
multiplying $wx_0^{-(j-1)}$ by \xinv0\ changes $\wedge
_{m+2+(j-1)(p+1)}$ (the rightmost child of the root) in
$S_-^{j-1}$ from type \rtone\ to type \lftl. This is the
only change in the negative tree.  So we can see that the
resulting tree-pair diagram representative for $wx_0^{-j}$
will have $\wedge_{m+2+(j-1)(p+1)}$ as the root caret and
$\wedge_{m+2+(j-2)(p+1)}$ as the leftmost child of the
root.  The relative location of all other carets in the
tree will be identical to their placement in the minimal
tree-pair diagram representative for $wx_0^{-(j-1)}$.  So
it is clear that Figure \ref{wx0qinv} (when $q=j$) is a
tree-pair diagram representative for $wx_0^{-j}$.  Now we
need only show that it is minimal; we note that any carets which are exposed
in $(S^j_-,S^j_+)$ would also have
been exposed in $(S^{j-1}_-,S^{j-1}_+)$, so minimality of $(S_-^{j-1},S_+^{j-1})$ implies minimality of $(S_-^{j},S_+^{j})$.

\par Now we consider the effect of multiplication of
$wx_0^{-(j-1)}$ by $x_0^{-1}$ on the length of $wx_0^{-(j-1)}$.
The caret $\wedge_{m+2+(j-1)(p+1)}$ will
always be a successor of the caret $\wedge_m$ in both
$S_+^{j-1}$ and
$S_+^j$, and the only successors of $\wedge_m$ in
$S_+^{j-1}$ and $S_+^j$ which are not of type \rtr\ are
$\wedge_{m+np+n}$ and $\wedge_{m+np+n+1}$.  Since $j<n$, it
is clear that $m+2+(j-1)(p+1)<m+np+n$ and therefore
$\wedge_{m+2+(j-1)(p+1)}$ is of type \rtr\ in $S^{j-1}_+$
and $S_+^j$. Therefore, this change in the caret
$\wedge_{m+2+(j-1)(p+1)}$ in the negative tree from type
\rtone\ to type \lftl\ changes the pairing from
(\rtone,\rtr), which has weight 2, to (\lftl,\rtr), which
has weight 1 (see Table \ref{weighttable}).  So $|wx_0^{-j}|=|wx_0^{-(j-1)}|-1$.  And since
by our inductive hypotheses $|wx_0^{-(j-1)}|=|w|-(j-1)$,
$$|wx_0^{-j}|=|w|-(j-1)-1=|w|-j \hbox{ for all }j \hbox{ s.t. }
0<j<n-1$$

\item $|wx_0^q|$:  The proof that $|wx_0^q|=|w|-q$ is
similar to the proof that $|wx_0^{-q}|=|w|-q$.  The
primary difference is that the caret in
$(R_-^{j-1},R_+^{j-1})$ in Figure \ref{wx0q} whose type
is changed by multiplication by $x_0$ is $\wedge
_{m-(j-1)}$ (the root caret) in $R_-^{j-1}$, which is
changed from type \lftl\ to type \rtr\ (or \rtp\ in the
case $j=1$). In the same way as for the $x_0^{-1}$
case, this leads to the conclusion that Figure
\ref{wx0q} is a minimal tree-pair diagram
representative of $wx_0^{j}$ when $q=j$.  Then to
compute the effect of multiplication by $x_0$ on
length, we note that the caret $\wedge _{m-(j-1)}$ in
$R_+^{j-1}$ or $R_+^j$ will always be of type \lftl\
for any given $j=1,\dots,m-2$ because
$\wedge_{m-(j-1)}$ is a predecessor of the root
$\wedge_m$ in $R_+^{j-1}$ and $R_+^j$ since $m-(j-1)<m$and,
and the only predecessors of the root in $R_+^{j-1}$ or
$R_+^j$ which are not of type \lftl\ are $\wedge_1$ and
$\wedge_0$.  Since $j\le m-2$ guarantees that
$m-(j-1)>1$ for all possible $j$,
$\wedge_{m-(j-1)}\ne\wedge_1$ or $\wedge_0$. Therefore,
this change in the caret $\wedge _{m-(j-1)}$ from type
\lftl\ to type \rtr\ (or \rtp\ when $j=1$) changes the
pairing from (\lftl,\lftl), which has weight 2, to
(\rtr,\lftl) (or (\rtp,\lftl) when $j=1$), which has
weight 1 (see Table \ref{weighttable}).  Then similarly
to the $x_0^{-1}$ case, we can use induction to
conclude that $|wx_0^{q}|=|w|-q$ and that Figure
\ref{wx0q} is a minimal tree-pair diagram
representative of $wx_0^{q}$ for all $q$ such that
$0\le q<m-1$.
\end{enumerate}
\end{proof}

\par Now we show that all $w\in\mathcal{S}$ satisfy part 2 of
the
definition of a seesaw word by considering the ``action" of each $g\in X\cup X^{-1}$ on
$wx_0^{\pm q}$ for arbitrary $q$ such that $0\le q<m-1,n-1$,
and showing that this ``action" always results in increased
length.

\begin{lem}\label{wx0pmg}
For $w\in{\mathcal S}$, $\epsilon\in\{-1,1\}$, and arbitrary
$q$ s.t. $0<|q|<m-1,n-1$, $$|wx_0^{\epsilon
q}g|\ge|wx_0^{\epsilon q}|$$ for all $g\in X\cup X^{-1}$.
\end{lem}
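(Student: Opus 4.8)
The goal is to show that multiplying $wx_0^{\epsilon q}$ on the right by any generator $g\ne x_0^{\pm 1}$ (the case $g=x_0^{\pm 1}$ being handled by Lemma~\ref{shrinking}, since $0<|q|<m-1,n-1$ keeps us strictly inside the swing) can only increase or preserve length. The plan is to exploit Fordham's machinery directly: for each generator $g$, I will show that either the subtree condition fails for the product $(wx_0^{\epsilon q})g$, in which case Theorem~\ref{addcaret}(1) gives $|wx_0^{\epsilon q}g|>|wx_0^{\epsilon q}|$ immediately, or else the subtree condition holds and exactly one caret changes type (Theorem~\ref{onecaret}), so that I need only check that the resulting weight change recorded in Table~\ref{weighttable} is nonnegative.

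The key structural input is that by Lemma~\ref{shrinking} we already know the exact minimal tree-pair diagrams of $wx_0^{\epsilon q}$: they are the diagrams in Figures~\ref{wx0qinv} and \ref{wx0q}. First I would fix one of these two diagrams and read off, from Figure~\ref{genaction}, precisely which caret of the negative tree each generator $g\in X\cup X^{-1}$ acts on, and which leaf-subtree it requires in order to avoid adding carets. Generators whose action demands a child subtree that is not present at the relevant leaf of $(S^q_-,S^q_+)$ (respectively $(R^q_-,R^q_+)$) fail the subtree condition, and these are dispatched by Theorem~\ref{addcaret}(1). For the remaining generators the subtree condition holds, and I would identify the single caret $\wedge_i$ that changes type under Theorem~\ref{onecaret}, determine its old and new types, and look up the corresponding entries of Table~\ref{weighttable} for the pairing $(\tau,\tau')$ against its fixed positive-tree partner. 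In each such case I must verify the new weight is at least the old weight.

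The main obstacle is bookkeeping rather than depth: because the generating set has $p+1$ elements and their inverses, and because the argument must be run for both the $wx_0^q$ and the $wx_0^{-q}$ diagrams, there are many generators to track, and for the middle generators $x_i$ one must correctly read off the parent caret type in the diagram to apply the type-assignment rules of Figures~\ref{type} and \ref{type2}. The delicate step is confirming, for those generators that do satisfy the subtree condition, that the lone caret type change never converts a heavier pairing into a lighter one; this relies on the constraint $0<|q|<m-1,n-1$, which guarantees that the carets being acted upon sit in the \rtr\ or \lftl\ regions of the positive tree (away from the special carets $\wedge_0,\wedge_1,\wedge_{m+np+n},\wedge_{m+np+n+1}$), so their positive-tree partners have the stable types needed to make the Table~\ref{weighttable} comparison come out nonnegative. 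I would organize the proof as a short case analysis over the generators, citing Theorem~\ref{addcaret}(1) for the subtree-failure cases and a one-line weight comparison for each of the remaining cases.
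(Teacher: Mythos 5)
Your overall strategy coincides with the paper's: you use the explicit minimal diagrams of $wx_0^{\pm q}$ supplied by Lemma \ref{shrinking} (Figures \ref{wx0qinv} and \ref{wx0q}), run a case analysis over the generators, dispatch the generators that force a caret addition via Theorem \ref{addcaret}(1), and for the remaining generators identify the single caret whose type changes and verify in Table \ref{weighttable} that its weight does not drop, using the constraint $0<|q|<m-1,n-1$ to guarantee that the affected caret's positive-tree partner has type \rtr\ or \lftl\ (away from $\wedge_0,\wedge_1,\wedge_{m+np+n},\wedge_{m+np+n+1}$). This is exactly the paper's proof in outline, including the key positional observation.

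However, there is a genuine hole in your dichotomy. You split into ``the subtree condition fails'' versus ``the subtree condition holds, so Theorem \ref{onecaret} applies,'' but Theorem \ref{onecaret} requires \emph{both} the subtree condition \emph{and} the minimality condition. This is not a pedantic point here: by Theorem \ref{addcaret}(2), if the subtree condition holds but minimality fails, then $|wx_0^{\epsilon q}g| = |wx_0^{\epsilon q}|-1$, which is precisely the outcome the lemma must exclude. So for every generator you place in your second case you must additionally verify that the product diagram is minimal (no caret pair becomes exposed and cancellable after the ``action''); the paper does this explicitly (``\ldots satisfy both the subtree and minimality conditions\ldots''), and without that check the weight-table comparison is unjustified and the argument cannot close. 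A second, smaller error: your parenthetical that the case $g=x_0^{\pm1}$ is ``handled by Lemma \ref{shrinking}'' is backwards for $g=x_0^{\epsilon}$, since that lemma gives $|wx_0^{\epsilon q}x_0^{\epsilon}| = |w|-|q|-1 < |wx_0^{\epsilon q}|$, i.e.\ the stated inequality \emph{fails} for that generator. The intended reading (and the paper's) is that $x_0^{\pm1}$ is simply excluded, matching part 2 of Definition \ref{seesawword}; accordingly the paper's proof treats only $g=x_i^{\pm1}$ for $i\in\{1,\dots,p\}$.
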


\begin{proof}We consider each possible combination of values of
$\epsilon$ and $g$:
\begin{enumerate}
\item $|wx_0^{-q}x_i^{\pm1}|$, $i\in\{1,2,...,p\}$: First
we note that $wx_0^{-q}$ and $x_i^{\pm1}$ when $0\le
q<m-1,n-1$ and $i=1,2,...,p$ satisfy both the subtree
and minimality conditions of Theorem \ref{onecaret}
except when $q=0$ and $i=1,...,p-1$.  So only one caret
will change type in the negative tree and the positive
tree will remain unchanged after multiplication in
these cases.

 \par We begin with the case $q=0$.
 \begin{enumerate}
\item $|wx_i^{-1}|$: Multiplying $w$ by $x_i^{-1}$ changes
$\wedge_{m+2}$ from type \rtone\ to type \mione\ and changes no other caret types.  Since
all the carets in $S_+$ and $S_+^1$ which succeed
$\wedge_m$ and precede $\wedge_{m+np+n}$ have type \rtr\
and $m<m+2<m+np+n$, $\wedge_{m+2}$ is of type \rtr\ in
$S_+$ and $S_+^1$.  So the change in the type pair of
$\wedge_{m+2}$ goes from (\rtone,\rtr) which has weight 2
to type (\mione,\rtr) which has weight 3, and clearly
$|wx_i^{-1}|>|w|$.
\item $|wx_i|$: Multiplying $w$ by $x_i$ when
$i=1,...,p-1$ does not satisfy the subtree condition and
therefore by Theorem \ref{addcaret}, $|wx_i|>|w|$.
Multiplying $w$ by $x_p$ changes $\wedge_{m+1}$ from type
\mpnot\ to type \rtr\ and changes no other caret types.  Since $m<m+1<m+np+n$,
$\wedge_{m+1}$ is of type \rtr\ in $S_+$ and $R_+^1$.  So
this change in the type pair of $\wedge_{m+1}$ goes from
(\mpnot,\rtr) which has weight 1 to (\rtr,\rtr) which has
weight 2, so $|wx_p|>|w|$.
\end{enumerate}

    \par Now we consider multiplying $wx_0^{-q}$ for
$0<q<m-1,n-1$ by
$x_i^{\pm1}$ for $i=1,2,...,p$, when both conditions of Theorem \ref{onecaret} are met.
\begin{enumerate}
\item $|wx_0^{-q}x_i^{-1}|$:  Multiplying $wx_0^{-q}$
by \xinv{i} changes $\wedge _{m+2+q(p-1)}$ (the
right child of the root) in $S_-^q$ from type
\rtone\ to type \mione. In $S_+^q$ and $S_+^{q+1}$,
all carets which succeed $\wedge_m$ and precede
$\wedge_{m+np+n}$ have type \rtr, so since
$m<m+2+q(p-1)<m+np+n$ (because $q<n-1$), $\wedge
_{m+2+q(p-1)}$ is of type \rtr\ in $S_+^q$ and
$S_+^{q+1}$.  So this multiplication changes the
type pair of $\wedge_{m+2+q(p-1)}$ from
(\rtone,\rtr), which has weight 2, to
(\mione,\rtr), which has weight 3. So
$|wx_0^{-q}\x{i}^{-1} |=|w|-q+1$.

\item $|wx_0^{-q}x_i|$:  Multiplying $wx_0^{-q}$ by
\x{i}\ changes $\wedge _{m+2+(q-1)(p-1)+i}$  (the
$ith$ child of the root) in $S_-^q$ from type
\minot\ to type \riplusone\ when $i<p$ and to type
\rtr\ when $i=p$.  Again, since
$m<m+2+(q-1)(p-1)+i<m+np+n$ (because $q<n-1$),
$\wedge_{m+2+(q-1)(p-1)+i}$ is of type \rtr\ in
$R_+^q$ and $R_+^{q+1}$.  So this multiplication
changes the type pair of
$\wedge_{m+2+(q-1)(p-1)+i}$ from (\minot,\rtr),
which has weight 1, to (\riplusone,\rtr) when $i<p$
and (\rtr,\rtr) when $i=p$, both of which have
weight 2. So $|wx_0^{-q}\x{i} |=|w|-q+1$.
\end{enumerate}

\item $|wx_0^{q}x_i^{\pm1}|$, $i\in\{1,2,...,p\}$: Now we
consider multiplying $wx_0^{q}$ for $0<q<m-1,n-1$ by
$x_i^{\pm1}$ for $i=1,2,...,p$.  First we note that
$wx_0^{q}$ and $x_i^{-1}$ when $0\le q<m-1,n-1$ and
$i=1,2,...,p$ satisfy both the subtree and minimality
conditions of Theorem \ref{onecaret}.  So only one
caret will change type in the negative tree and the
positive tree will remain unchanged after
multiplication in this case.
\begin{enumerate}
\item $|wx_0^{q}x_i^{-1}|$: If we let $i=1,\dots,p$,
multiplying $wx_0^{q}$ by \xinv{i} changes the
rightmost child of the root, which is
$\wedge_{m-q+1}$ when $q>0$ and $\wedge_{m+2}$ when
$q=0$.  When $q=0$, we can conclude that
$\wedge_{m+2}$ is of type \rtr\ in both $S_+$ and
$R_+^1$ (since $m<m+2<m+np+n$), and $\wedge_{m+2}$
is changed from type \rtone\ to type \mione,
changing the type pairing from (\rtone,\rtr) which
has weight 2 to (\mione,\rtr) which has weight 3.
When $q>0$, we can conclude that $\wedge_{m-q+1}$
is of type \lftl\ in both $R_+^q$ and $R_+^{q+1}$
since all carets which succeed $\wedge_1$ and
precede $\wedge_{m+1}$ in $R_+^q$ and $R_+^{q+1}$
are of type \lftl\ and clearly $1<m-q+1<m+1$ (since
$q<m-1$).  When $q=1$, $\wedge_{m-q+1}$ is changed
from type \rtr\ to type \minot, changing the type
pairing from (\rtp,\lftl) or (\rtr,\lftl), both of
which have weight 1, to (\minot,\lftl) which has
weight 2.  So $|wx_0^{q}\x{i}^{-1} |=|w|-q+1$.

\item $|wx_0^{q}x_i|$, $i\in\{1,2,...,p\}$:
\begin{enumerate}
\item $|wx_0^{q}x_i|$, $i\in\{1,2,...,p\}$, except
when $q=0$ and $i=p$:   In this case,
multiplying $wx_0^{q}$ by \x{i}\ does not
satisfy the required conditions of Theorem
\ref{onecaret} because we must add a caret
before we can complete the multiplication, so
we know from Theorem \ref{addcaret} that
$|wx_0^{q}\x{i}|>|wx_0^{q}|$ in this case.
\item
$|wx_0^{q}x_p|$:  When $q=0$, $wx_0^q$ and
$x_p$ satisfy the required subtree and
minimality conditions of Theorem \ref{onecaret}
and therefore only one caret changes type in
the negative tree and the positive tree remains
unchanged.  The caret $\wedge_{m+1}$ is changed
from type \mpnot\ to type \rtr.  Since
$m<m+1<m+np+n$, it is clear that $\wedge_{m+1}$
is of type \rtr\ in $S_+$ and $R_+^1$, and so
the change in type pairing goes from
(\mpnot,\rtr) which has weight 1 to (\rtr,\rtr)
which has weight 2.  So we can conclude that
that $|wx_0^{q}\x{i}|>|wx_0^{q}|$ in this
case.
\end{enumerate}
\end{enumerate}
\end{enumerate}
\end{proof}

\begin{proof}[Proof of Theorem \ref{seesawNF}]
This proof follows immediately from Lemma \ref{shrinking},
Lemma \ref{wx0pmg}, and Definition \ref{seesawword}.  So all
$w\in{\mathcal S}$ are seesaw words, and we can create such
words with any given swing $k$ (where $0<k<min\{m-1,n-1\}$) by
choosing $m$ and $n$ such that $m,n>k+1$.
\end{proof}

\begin{cor}\label{seesawexist}
Thompson's group $F(p+1)$ contains seesaw words of arbitrarily
large swing with respect to $x_0\in X$.
\end{cor}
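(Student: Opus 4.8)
The plan is to derive this corollary directly from Theorem \ref{seesawNF}. That theorem exhibits an explicit infinite family $\mathcal{S}$ of words, parameterized by the two integers $m$ and $n$, and establishes that every member of $\mathcal{S}$ is a seesaw word with respect to $x_0\in X$. Since the corollary asks only for the existence of seesaw words of arbitrarily large swing, it suffices to track how the swing of a word $w\in\mathcal{S}$ depends on $m$ and $n$, and then to let these parameters grow without bound.

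First I would recall from Lemma \ref{shrinking} that $|wx_0^{\pm q}|=|w|-q$ precisely for $0<q<\min\{m-1,n-1\}$, which is exactly part 1 of Definition \ref{seesawword}. Lemma \ref{wx0pmg} then supplies part 2 over the same range of $q$, showing that multiplication by any $g\in X\cup X^{-1}$ with $g\neq x_0$ cannot decrease length. Together these say that $w$ is a seesaw word whose swing $k$ may be taken to be any integer with $0<k<\min\{m-1,n-1\}$.

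Given an arbitrary target swing $k\in\mathbb{N}$, I would then choose the parameters so that $m,n>k+1$ (for instance $m=n=k+2$), which guarantees $k<\min\{m-1,n-1\}$ and hence produces a member of $\mathcal{S}$ that is a seesaw word of swing at least $k$ with respect to $x_0$. Because $k$ was arbitrary, this realizes seesaw words of every swing, and therefore of arbitrarily large swing, establishing the corollary.

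The genuine content here lives entirely upstream: the real work is the verification of the two defining conditions of a seesaw word for the family $\mathcal{S}$, carried out in Lemmas \ref{shrinking} and \ref{wx0pmg} via Fordham's metric together with the caret-type bookkeeping of Theorems \ref{onecaret} and \ref{addcaret}. Once Theorem \ref{seesawNF} is in hand, the corollary is a one-line consequence, so I expect no obstacle beyond correctly reading off the swing bound $\min\{m-1,n-1\}$ from the length computations and observing that it grows without bound as $m$ and $n$ increase.
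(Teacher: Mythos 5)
Your proposal is correct and takes essentially the same route as the paper: the paper folds this argument into the final sentence of its proof of Theorem \ref{seesawNF}, observing that all $w\in\mathcal{S}$ are seesaw words and that any swing $k$ with $0<k<\min\{m-1,n-1\}$ is attainable by choosing $m,n>k+1$. Your reading of the swing bound from Lemmas \ref{shrinking} and \ref{wx0pmg}, and the choice $m=n=k+2$, match the paper's reasoning exactly.
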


\subsection{Consequences}
\begin{lem}\label{kfellowtrav}
Given any constant k, there exists a word $w\in {\mathcal S}$
such that no geodesics paths from the identity to $wx_0$, $w$,
or $wx_0^{-1}$ satisfy the k-fellow traveler property.
\end{lem}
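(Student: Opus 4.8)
The plan is to exploit the two-armed structure of a seesaw word directly. Fix the constant $k$, and let $s$ denote the swing of a seesaw word. Using Corollary \ref{seesawexist} I would choose $w\in\mathcal{S}$ whose swing $s$ is large enough that $|x_0^{2s}|>k$; this is possible because $x_0$ has infinite order, so only finitely many powers of $x_0$ lie in the ball $B_k$, forcing $|x_0^{2s}|\to\infty$ as $s\to\infty$ (so I never need to compute $|x_0^{2s}|$ exactly via Fordham's metric). Write $A=wx_0^{-1}$, $B=w$, $C=wx_0$; then $d(A,B)=d(B,C)=1$, so any combing would have to assign to $A,B,C$ geodesics that $k$-fellow travel across each of these two edges.

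The main step is to show that geodesics to $A$ and to $C$ are \emph{pinned} to opposite arms of the seesaw. First I would observe that $A=wx_0^{-1}$ behaves as a one-sided seesaw: for $0<l<s$, part (1) of Definition \ref{seesawword} gives $|wx_0^{-(l+1)}|=|w|-(l+1)<|w|-l=|wx_0^{-l}|$ while $|wx_0^{-(l-1)}|=|w|-(l-1)>|wx_0^{-l}|$, so of the two powers of $x_0$ only $x_0^{-1}$ decreases the length of $wx_0^{-l}$; and part (2) gives $|wx_0^{-l}h|\ge|wx_0^{-l}|$ for every remaining generator $h\notin\{x_0,x_0^{-1}\}$. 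Hence $x_0^{-1}$ is the unique length-decreasing generator at each interior vertex $wx_0^{-l}$. Tracing any geodesic to $A$ backwards from $A$, at each vertex $wx_0^{-l}$ (for $l=1,\dots,s-1$) the only length-decreasing move is $x_0^{-1}$, so the geodesic is forced through $wx_0^{-2},wx_0^{-3},\dots,wx_0^{-s}$ in turn; thus every geodesic to $A$ sits at $wx_0^{-s}$ at distance $|w|-s$. The identical argument with $x_0$ in place of $x_0^{-1}$ shows every geodesic to $C=wx_0$ sits at $wx_0^{s}$ at distance $|w|-s$. Finally, since $w$ itself is a two-sided seesaw, every geodesic to $B=w$ ends in $x_0^{s}$ or $x_0^{-s}$ and so sits at either $wx_0^{-s}$ or $wx_0^{s}$ at distance $|w|-s$.

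With the pinning established, the contradiction is immediate. Let $\gamma_A,\gamma_B,\gamma_C$ be arbitrary geodesics to $A,B,C$, and compare them synchronously at distance $t=|w|-s$. The two branch vertices satisfy $d(wx_0^{-s},wx_0^{s})=|(wx_0^{-s})^{-1}(wx_0^{s})|=|x_0^{2s}|>k$. Now $\gamma_B(t)$ equals exactly one of $wx_0^{-s}$ or $wx_0^{s}$: if $\gamma_B(t)=wx_0^{s}$ then $d(\gamma_A(t),\gamma_B(t))=|x_0^{2s}|>k$, so $\gamma_A$ and $\gamma_B$ violate condition (2) of Definition \ref{kfeltravdef}; if $\gamma_B(t)=wx_0^{-s}$ then $d(\gamma_B(t),\gamma_C(t))=|x_0^{2s}|>k$, so $\gamma_B$ and $\gamma_C$ violate it. In either case one of the two required edges fails to $k$-fellow travel, and since the $\gamma$'s were arbitrary, no choice of geodesics to $A$, $B$, $C$ can $k$-fellow travel across both edges.

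The step I expect to be the main obstacle is the pinning claim, namely verifying rigorously that the only length-decreasing generator at each interior vertex $wx_0^{\pm l}$ is $x_0^{\pm1}$, so that geodesics to $A$ and $C$ have no freedom to leave their respective arms before distance $|w|-s$. This is exactly where the strong form of the seesaw condition must be applied along the whole arm rather than only at $w$; care is also needed because part (2) of Definition \ref{seesawword} should be invoked only for $h\notin\{x_0,x_0^{-1}\}$, with the behaviour of the two powers of $x_0$ read off instead from the length formula in part (1). Everything after the pinning is just bookkeeping with the lower bound $|x_0^{2s}|>k$.
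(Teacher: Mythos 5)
Your proof is correct and is essentially the same argument the paper relies on: the paper's proof is a one-line citation to Prop.~4.2 of Cleary and Taback \cite{seesaw}, and that proposition is proved by exactly this pinning-plus-divergence argument, which you have correctly reconstructed and generalized to $F(p+1)$ and the family ${\mathcal S}$. Your care in reading part (2) of Definition \ref{seesawword} as excluding both $x_0$ and $x_0^{-1}$, with the behavior at $w$ itself (the $l=0$ case) supplied by the $q=0$ computations in Lemma \ref{wx0pmg}, is precisely what is needed to make the pinning step rigorous.
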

\begin{proof}
This holds for the same reasons that
Prop. 4.2 in \cite{seesaw} holds for $p=1$.
\end{proof}

\begin{thm}\label{notcombable}
Thompson's group $F(p+1)$ is not combable by geodesics.
\end{thm}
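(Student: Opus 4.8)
The plan is to argue by contradiction, with all of the genuine geometric content already packaged in Corollary~\ref{seesawexist} and Lemma~\ref{kfellowtrav}; the work here is to assemble these into a contradiction with Definition~\ref{combable}. So I would begin by supposing that $F(p+1)$ is combable by geodesics. By Definition~\ref{combable} this yields a language $L$ of geodesic words representing every element of $F(p+1)$ together with a single constant $k$ such that any two words of $L$ whose endpoints lie at distance $1$ in $\Gamma(F(p+1),X)$ synchronously $k$-fellow travel in the sense of Definition~\ref{kfeltravdef}. The crucial point is that $k$ is now fixed, once and for all, by the hypothetical combing.

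Next I would feed this particular $k$ into Lemma~\ref{kfellowtrav} to obtain a seesaw word $w\in\mathcal{S}$ for which no geodesic paths from the identity to $wx_0$, $w$, and $wx_0^{-1}$ satisfy the $k$-fellow traveller property. The reason such a $w$ exists and does the job is conceptual and already implicit in the earlier results: Corollary~\ref{seesawexist} guarantees seesaw words of arbitrarily large swing, and by Definition~\ref{seesawword} every geodesic representative of a seesaw word of swing $k'$ must terminate in $x_0^{k'}$ or $x_0^{-k'}$, with at least one geodesic of each type. Thus as $w$ is approached from the two neighbours $wx_0$ and $wx_0^{-1}$, the associated geodesics are pulled in opposite $x_0$-directions and are forced to diverge by more than $k$ at a comparable height once the swing exceeds $k$ — which is precisely the divergence Lemma~\ref{kfellowtrav} records.

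I would then close the argument. Since $d(w,wx_0)=1$ and $d(w,wx_0^{-1})=1$, and since $L$ represents every element, $L$ contains geodesic words for $w$, $wx_0$, and $wx_0^{-1}$ whose adjacent pairs $k$-fellow travel by the defining property of the combing. These are exactly geodesic paths from the identity to $wx_0$, $w$, $wx_0^{-1}$ satisfying the $k$-fellow traveller property, contradicting the choice of $w$ in Lemma~\ref{kfellowtrav}. Hence no language $L$ and constant $k$ can exist, so $F(p+1)$ is not combable by geodesics. The only delicate step in this assembly — and the one I would state carefully — is the bookkeeping between the distance-$1$ adjacencies demanded by Definition~\ref{kfeltravdef} and the triple appearing in the lemma: only $(w,wx_0)$ and $(w,wx_0^{-1})$ are genuinely at distance $1$, while $wx_0$ and $wx_0^{-1}$ are at distance $2$, so the divergence of the two outer geodesics must be read off by comparing each of them to the geodesic to $w$ through the fixed constant $k$. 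This is exactly the configuration Lemma~\ref{kfellowtrav} rules out, and since that lemma already absorbs the quantitative estimate, no further calculation is needed here.
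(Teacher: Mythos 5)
Your proposal is correct and takes essentially the same approach as the paper: the paper's proof of Theorem~\ref{notcombable} is a one-line citation to Theorem~4.2 of \cite{seesaw}, and the argument behind that citation is exactly the contradiction you assemble — fix the combing constant $k$, invoke Lemma~\ref{kfellowtrav} (itself the generalization of Prop.~4.2 of \cite{seesaw}) to get $w\in\mathcal{S}$, and observe that the combing's geodesic representatives of $w$, $wx_0$, and $wx_0^{-1}$ would have to $k$-fellow travel in the adjacent pairs $(w,wx_0)$ and $(w,wx_0^{-1})$, which the lemma forbids. Your write-up simply supplies, correctly, the details the paper delegates to the citation, including the careful point that only those two pairs are at distance $1$.
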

\begin{proof}
This holds for the same reasons that
Theorem 4.2 in \cite{seesaw} holds for $p=1$.
\end{proof}

\begin{thm}[Theorem 30 in
\cite{geolang}]\label{noreglang}
A group $G$ generated by a finite set $X$ with
seesaw elements of arbitrary swing w.r.t. $X$ has no
regular language of geodesics.
\end{thm}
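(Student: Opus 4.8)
The plan is to argue by contradiction using the pumping lemma for regular languages, so this is really a statement about finite-state automata rather than about $G$ specifically. Suppose $G$ had a regular language $L \subseteq (X\cup X^{-1})^*$ consisting entirely of geodesic words and surjecting onto $G$ (every group element having at least one representative in $L$); note this covers both the interpretation ``the full set of geodesics is regular'' and the weaker ``some regular set of geodesic normal forms represents every element.'' Let $N$ be the pumping length supplied by the pumping lemma. Since $G$ has seesaw elements of arbitrarily large swing with respect to $g$, I would fix a seesaw element $w$ whose swing $k$ satisfies $k>N$; this is precisely where the hypothesis of \emph{arbitrarily large} swing is spent.

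The central trick is to force a long, uniform run of a single letter $g^{\pm1}$ to sit \emph{inside} the pumping window, i.e. among the first $N$ letters of a word of $L$. The seesaw property of Definition \ref{seesawword} places such a run at the \emph{end} of every geodesic for the element $w$ (each geodesic representative terminates in $g^k$ or $g^{-k}$), so I would instead work with $w^{-1}$: the formal inverse of a geodesic word reverses its letters and inverts each one, and this is a length-preserving bijection between geodesics for $w$ and geodesics for $w^{-1}$, so every geodesic representative of $w^{-1}$ \emph{begins} with $g^{-k}$ or $g^{k}$. Choosing (by surjectivity) a representative $\omega\in L$ of $w^{-1}$, the word $\omega$ is geodesic and therefore starts with a run $g^{\epsilon k}$ for some fixed $\epsilon\in\{+1,-1\}$, say $\omega = g^{\epsilon k}\tau$. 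Because $|\omega|=|w^{-1}|=|w|\ge k>N$, the pumping lemma applies, and since $k>N\ge|\alpha\beta|$ the pumped factor $\beta$ lies entirely within the initial run; hence $\beta = g^{\epsilon b}$ with $1\le b\le N<k$.

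I would then pump once up. The word $\alpha\beta^2\gamma$ lies in $L$, equals $g^{\epsilon(k+b)}\tau$, has length $|w|+b$, and represents the group element $g^{\epsilon b}w^{-1}$, whose length satisfies $|g^{\epsilon b}w^{-1}| = |wg^{-\epsilon b}|$. Here part (1) of Definition \ref{seesawword} does the decisive work: since $0<b\le k$ we get $|wg^{-\epsilon b}| = |w|-b$. Thus $\alpha\beta^2\gamma$ is a word of length $|w|+b$ representing an element of length $|w|-b<|w|+b$, so it is \emph{not} geodesic, contradicting $\alpha\beta^2\gamma\in L$. This contradiction shows no such $L$ exists.

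The main obstacle is reconciling two competing demands: the pumped run must fall in the bounded window of size $N$, yet I must still control the true word-length of the pumped element. Passing to $w^{-1}$ resolves the placement problem (moving the rigid $g$-run to the front), and the seesaw length identity resolves the control problem; the only care needed is the bookkeeping that makes the quantitative bounds line up, namely $b\le|\alpha\beta|\le N<k$ so that the seesaw identity is valid for the exponent $b$, and $|w|\ge k$ so the pumping hypothesis $|\omega|\ge N$ holds. It is worth recording that the argument uses only property (1) of the seesaw definition together with the rigidity statement that every geodesic for $w$ ends in $g^{\pm k}$ — property (2) being exactly what guarantees that rigidity.
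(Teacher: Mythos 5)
Your proposal is correct, but there is nothing in the paper to compare it against: the paper does not prove Theorem \ref{noreglang} at all, it simply imports it as ``Theorem 30'' of the cited reference and then applies it (via Corollary \ref{seesawexist}) to conclude Corollary \ref{notreg}. Judged on its own merits, your pumping argument is sound and complete. The two essential moves are both present and both needed: (i) passing to $w^{-1}$, so that the rigidity clause of Definition \ref{seesawword} (every geodesic for $w$ ends in $g^{k}$ or $g^{-k}$) becomes a statement about the \emph{prefix} of every geodesic for $w^{-1}$, which is what lets the condition $|\alpha\beta|\le N<k$ force $\beta=g^{\epsilon b}$; and (ii) pumping \emph{up} rather than down. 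Point (ii) deserves emphasis: pumping down produces $\alpha\gamma=g^{\epsilon(k-b)}\tau$, a word of length $|w|-b$ representing $g^{-\epsilon b}w^{-1}$, whose length is $|wg^{\epsilon b}|=|w|-b$ by seesaw property (1), so the pumped-down word is still geodesic and yields no contradiction; only the pumped-up word $\alpha\beta^{2}\gamma$, of length $|w|+b$ but representing an element of length $|wg^{-\epsilon b}|=|w|-b$, violates geodesity. Your bookkeeping ($|w|\ge k>N$ so pumping applies, $1\le b\le N<k$ so property (1) is applicable to the exponent $b$, and $|x|=|x^{-1}|$ to convert $|g^{\epsilon b}w^{-1}|$ into $|wg^{-\epsilon b}|$) is all correct, and your hypothesis on $L$ (a regular language of geodesics meeting every group element) is the right one, since it covers both readings of the theorem. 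The only blemish is cosmetic: the rigidity statement you invoke is the ``in other words'' gloss of Definition \ref{seesawword}, whose derivation from clauses (1)--(2) as literally written needs the case $l=0$ of clause (2); the paper's definition is slightly loosely stated there, and you correctly treat the gloss as part of the definition, as the paper itself does.
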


\begin{cor}\label{notreg}
There does not exist a regular language of geodesics for $F(p+1)$ with respect to $X$.
\end{cor}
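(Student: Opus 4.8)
The plan is to obtain this as an immediate consequence of the two results already established, with no new computation required. First I would invoke Corollary \ref{seesawexist}, which guarantees that $F(p+1)$ contains seesaw words of arbitrarily large swing with respect to $x_0\in X$. Since $X$ is a finite generating set for $F(p+1)$, the hypotheses of Theorem \ref{noreglang} are met with $G=F(p+1)$ and generating set $X$: that theorem asserts precisely that any finitely generated group possessing seesaw elements of arbitrary swing admits no regular language of geodesics. Applying it directly then yields the conclusion that $F(p+1)$ has no regular language of geodesics with respect to $X$.

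There is essentially no obstacle at this final stage, since all of the substantive work has already been carried out: the construction and verification of the seesaw family $\mathcal{S}$ in Theorem \ref{seesawNF}, resting on Lemmas \ref{shrinking} and \ref{wx0pmg}, is what produces Corollary \ref{seesawexist}. The one point I would take care to confirm is that the quantifier ``arbitrarily large swing'' supplied by Corollary \ref{seesawexist} matches exactly the hypothesis ``seesaw elements of arbitrary swing w.r.t.\ $X$'' demanded by Theorem \ref{noreglang}. This matching is immediate: our seesaw words are taken with respect to the particular generator $x_0$, and since $x_0\in X$ this qualifies as seesaw behavior relative to the generating set $X$ in the sense required by the cited theorem. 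I would therefore present the proof as a single line combining Corollary \ref{seesawexist} with Theorem \ref{noreglang}, exactly parallel to how Corollary \ref{notcombable} was deduced from the combability obstruction.
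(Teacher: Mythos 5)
Your proposal is correct and matches the paper's approach exactly: the paper offers no separate proof for this corollary precisely because it follows immediately from Corollary \ref{seesawexist} (seesaw words of arbitrarily large swing with respect to $x_0\in X$) combined with Theorem \ref{noreglang}, which is the cited Theorem 30 of \cite{geolang}. Your check that seesaw behavior with respect to the single generator $x_0\in X$ satisfies the hypothesis of that theorem is the only point of substance, and you have handled it correctly.
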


\section{Dead ends exist in Thompson's group $F(p+1)$}
\par Cleary and Taback in \cite{combpropF} have shown that
$F(2)$ has dead ends, and that all these dead ends have depth
2.  In this section we use a similar approach to extend their
results to $F(p+1)$ for all $p\in\mathbb{N}$.

\subsection{Dead ends in $F(p+1)$}
\par The proofs in this section will contain many tree-pair
diagrams which use the following notational convention.

\begin{nota}[Subtrees in tree-pair
diagrams]\label{subtreecircles}
When depicting tree-pair diagrams, the symbol \unitlength=1pt
\begin{picture}(8,8)\put(5,4){\circle*{8}}\end{picture}
indicates the presence of a non-empty subtree, and the the
symbol
\unitlength=1pt\begin{picture}(8,8)\put(5,4){\circle{8}}\end{picture}
indicates the presence of a (possibly empty) subtree.  When
neither of these symbols are used, it is assumed that there is
no subtree present.
\end{nota}

\par Now we proceed to show that elements of
$F(p+1)$ are dead ends if and only if they have a minimal tree-pair
diagram representative with a specific form.

\begin{thm} \label{deadenddiag}
All dead ends in $F(p+1)$ under $X$ have minimal tree-pair diagrams of the form given in Figure
\ref{deadend}.
\end{thm}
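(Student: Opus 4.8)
The plan is to translate the dead-end condition into combinatorial constraints on the minimal tree-pair diagram $(T_-,T_+)$ using Fordham's machinery, and then to show that the only configurations surviving these constraints are those depicted in Figure \ref{deadend}. The starting observation is that, by Theorem \ref{addcaret}(1), if the subtree condition fails for any $g\in X\cup X^{-1}$ then $|wg|>|w|$, contradicting Definition \ref{deadends}. Hence a dead end must satisfy the subtree condition with respect to every generator and every inverse generator simultaneously. This is the strongest single constraint, and it is where most of the work lies.

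First I would read off, from the generator actions in Figure \ref{genaction}, exactly which subtree each $g$ demands of $T_-$ in order that $wg$ be computable without adding carets. The actions of $x_0^{\pm1}$ impose conditions near the root and right edge of $T_-$, while the actions of $x_i^{\pm1}$ for $i=1,\dots,p$ each impose a condition on a child of the root caret (mirroring the situation in Lemma \ref{wx0pmg}, where multiplying the bare seesaw word by $x_i$ with $i<p$ violated the subtree condition but multiplying $wx_0^{-q}$ by $x_i$ did not). Requiring all of these subtrees to be present at once forces the root caret of $T_-$ to carry a prescribed subtree beneath each of its children, together with matching structure along the right edge; this fixes the skeleton of $T_-$ up to the subtrees drawn as circles in Figure \ref{deadend}.

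Second, for each $g$ for which the minimality condition also holds, Theorem \ref{onecaret} guarantees that exactly one caret $\wedge_i$ of $T_-$ changes type, the positive tree being untouched by Remark \ref{noaddedcarets}. The dead-end inequality $|wg|\le|w|$ then says precisely that this single type change must not increase the weight recorded in Table \ref{weighttable}. I would work through the table entry by entry to determine, for each such $g$, which pairs of caret types at the affected position keep the weight from rising; intersecting these conditions over all generators pins down the admissible types of $\wedge_i$ in both trees, and I would verify that the resulting labeling coincides with Figure \ref{deadend}.

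The main obstacle is the bookkeeping, together with the need to make it uniform in $p$. There are $2(p+1)$ generators to track, and for each I must correctly extract both the structural requirement from Figure \ref{genaction} and the weight constraint from Table \ref{weighttable}, while keeping the indexing of the affected caret consistent. The genuinely delicate point is arguing that these local, necessary conditions are jointly sufficient to force the global form: I must rule out configurations in which some generator lowers the length through failure of minimality (Theorem \ref{addcaret}(2)) while a different generator would raise it, and I must confirm that no admissible combination of caret types outside Figure \ref{deadend} slips through. Handling this uniformly in $p$, rather than case-splitting on small $p$, is exactly what upgrades the $F(2)$ argument of Cleary and Taback to the general statement.
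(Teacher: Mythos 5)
Your overall strategy is the same as the paper's: use Theorem \ref{addcaret}(1) to force the subtree condition with respect to every $g\in X\cup X^{-1}$, read off from Figure \ref{genaction} the skeleton this imposes on $T_-$ (the paper's generic negative tree, Figure \ref{negtree}), then invoke Theorem \ref{onecaret} so that each multiplication changes exactly one caret pair, and intersect the weight constraints from Table \ref{weighttable} over all generators --- the decisive intersection being at the single right-edge caret hit by all of $x_0^{-1},x_1^{-1},\dots,x_p^{-1}$, which is what forces the subtrees along the right edge to be leaves and pins down Figure \ref{deadend}.

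There is, however, one missing idea, located exactly at the point you flag as ``genuinely delicate'' and then leave open. Your weight-table analysis for a given $g$ is valid only when the minimality condition holds for that $g$: otherwise Theorem \ref{onecaret} does not apply, and by Theorem \ref{addcaret}(2) the length drops by $1$ regardless of any caret types, so that generator imposes \emph{no} constraint whatsoever. If minimality could fail for even one generator, dead ends of shapes other than Figure \ref{deadend} could indeed ``slip through,'' and you offer no mechanism to rule this out. The paper closes this hole with Lemma \ref{negtreeform}: once the subtree condition holds for all $g$, the minimality condition is automatic. The reason is that the subtree condition gives $T_+=(Tg)_+$ (Remark \ref{noaddedcarets}), so a cancelling exposed caret pair in $((Tg)_-,(Tg)_+)$ could only arise if the ``action'' of $g$ exposed a caret of $(Tg)_-$ that was not already exposed in $T_-$; inspecting the action of each generator on a negative tree of the generic form (Figure \ref{genaction}) shows this never happens. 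With that observation, every one of the $2(p+1)$ generators genuinely contributes its weight constraint, and your intersection argument goes through. A second, minor omission: since all relators of $F(p+1)$ have even length, $|wg|\ne|w|$, so the dead-end inequality $|wg|\le|w|$ forces the weight change of the affected caret pair to be exactly $-1$; the paper uses this strict decrease, not merely ``does not increase,'' when reading the tables.
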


\begin{figure}[t]
\centering
\includegraphics[width=4in]{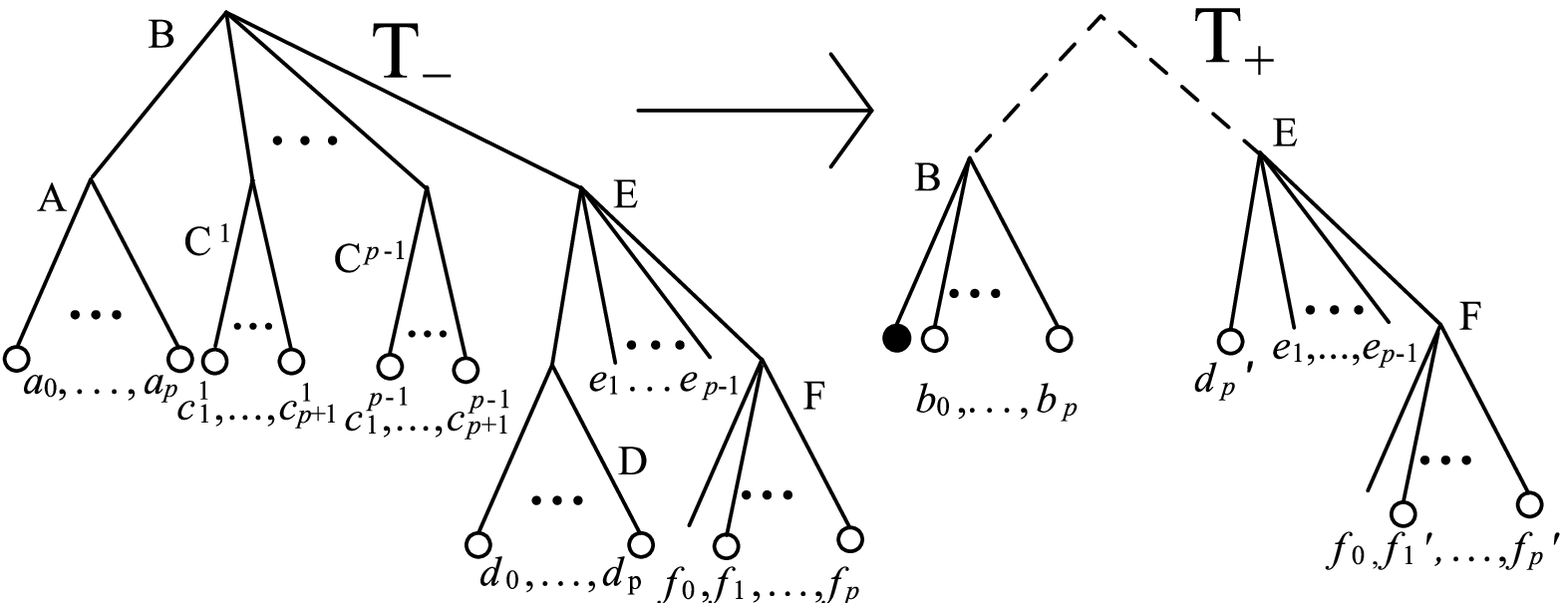}
\caption{{\bf Form of Minimal Tree-pair Diagram for All Dead
Ends in ${\bf F(p+1)}$}.  If $\wedge_{C^i}$, for some
$i\in\{1,...,p-1\}$, is of type \minot\ in $T_-$, then
$\wedge_{C^i}$ must be of type \lftl, \rtk\ \mlnot\ or \mlk\ in
$T_+$, where $k,l\le i$; similarly, if $\wedge_D$ is of type
\mpnot\ in $T_-$, then $\wedge_D$ cannot be of type \rtr\ or
\rtnot\ in $T_+$. }\label{deadend}
\end{figure}

\par We note that in Theorem \ref{deadenddiag} we mean that the
minimal form of the dead end tree-pair diagram representative
must include all of the carets explicitly given in Figure
\ref{deadend}, so, for example, at least one of the subtrees
labeled $f_1,...,f_p$ in $T_-$ and at least one of the subtrees
labeled $f_1',...,f_p'$ in $T_+$ are non-empty because
otherwise $\wedge_F$ would cancel.  The proof of this theorem is based upon recognizing how
the ``action" of each $g\in X\cup X^{-1}$ affects an arbitrary
tree-pair diagram $(T_-,T_+)$.

\begin{rmk}\label{caretchange}
The negative tree of any $(p+1)$--ary tree-pair diagram can be
written in the (possibly non-minimal) form given by Figure
\ref{negtree}, and for any negative tree in this form, the
``action" of any $g\in X\cup X^{-1}$ on $T_-$ will change only
one caret type in that tree (This is because the only other
changes in type that can occur when multiplying by a generator
are caused by the addition of carets to the tree-pair diagram,
but by definition, negative trees in this form will belong to
tree-pair diagrams to which all carets needed in order to
multiply by a generator or its inverse have already been added
- see Theorem \ref{onecaret} and Remark \ref{noaddedcarets}).

\par The ``action" of $g$ on this negative tree will produce the following
caret type change (see Figure \ref{genaction}):
\begin{enumerate}
\item $x_0$ takes the type of $\wedge_B$ from \lftl\ to
\rt.
\item $x_0^{-1}$ takes the type of $\wedge_E$ from
\rt\ to \lftl.
\item $x_i$ for $i=1,...,p-1$ takes the
type of $\wedge_{C^i}$ from \mi\ to \rt.
\item
$x_i^{-1}$ for $i=1,...,p-1$ takes the type of
$\wedge_E$ from \rt\ to \mi.
\item $x_{p}$ takes the
type of $\wedge_D$ from \mpp\ to \rt.
\item
$x_{p}^{-1}$ takes the type of $\wedge_E$ from \rt\ to
\mpp.
\end{enumerate}
\end{rmk}

\begin{figure}[t]
\centering
\includegraphics[width=2in]{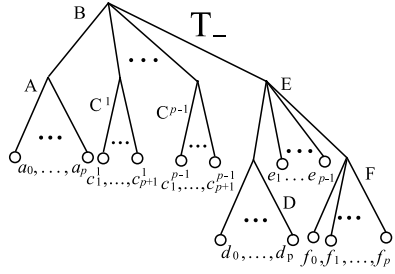}
\caption{A (possibly non-minimal) representation of a negative
tree in an arbitrary $(p+1)$--ary tree-pair diagram}\label{negtree}
\end{figure}

\par Because a dead end $w$ by definition must not increase in
length when multiplied by $g\in X\cup X^{-1}$ (by Theorem
\ref{addcaret}), the product $wg$ must satisfy the subtree
condition for any $g$.

\begin{lem}\label{negtreeform}
All dead ends must have a minimal tree-pair diagram with
negative tree of the form given by Figure \ref{negtree}, and
any dead end $w$ must satisfy the subtree and minimality
conditions with respect to all possible $g\in X\cup X^{-1}$.
\end{lem}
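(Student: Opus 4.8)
The plan is to separate the statement into its two assertions—that the subtree and minimality conditions hold for every generator, and that the minimal negative tree has the shape of Figure \ref{negtree}—and to deduce the tree shape from the subtree condition. The subtree condition is essentially forced by the definition of a dead end: if $w$ is a dead end then $|wg|\le|w|$ for every $g\in X\cup X^{-1}$ (Definition \ref{deadends}), while by part (1) of Theorem \ref{addcaret} a failure of the subtree condition for some $g$ would give $|wg|>|w|$. Hence the subtree condition holds for every generator and its inverse; this is already observed in the paragraph preceding the lemma.

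Next I would read off the negative-tree form. By Remark \ref{caretchange} (see Figure \ref{genaction}) the action of each generator reconfigures a single prescribed caret of $T_-$: $x_0$ acts on $\wedge_B$, each $x_i$ with $i=1,\dots,p-1$ on $\wedge_{C^i}$, $x_p$ on $\wedge_D$, and every inverse generator on $\wedge_E$. Satisfying the subtree condition means precisely that each such product is computed without adding carets, so for it to hold simultaneously for all of these generators the minimal negative tree must \emph{already} contain every one of $\wedge_B,\wedge_{C^1},\dots,\wedge_{C^{p-1}},\wedge_D,\wedge_E$. That is exactly the assertion that the minimal diagram's negative tree is in the form of Figure \ref{negtree}.

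The remaining and hardest point is the minimality condition. Since the subtree condition holds, Remark \ref{noaddedcarets} shows the positive tree is unchanged by the multiplication and Remark \ref{caretchange} shows that exactly one caret of $T_-$ changes type. I would argue by contradiction: if the minimality condition failed for some $g_0$, then by part (2) of Theorem \ref{addcaret} we would have $|wg_0|=|w|-1$, forcing a cancelling exposed caret pair in $((Tg_0)_-,(Tg_0)_+)$. Because $(Tg_0)_+=T_+$, the cancelling caret is already exposed in $T_+$ and must have become newly exposed in $(Tg_0)_-$ solely through the single caret-type change recorded in Remark \ref{caretchange}. I would then run through the six possible reconfigurations one at a time, identify in each case the unique caret of $T_-$ whose exposed status the reconfiguration can affect, and show that its cancelling against the fixed positive tree either (i) would already have produced a removable caret pair in the minimal diagram $(T_-,T_+)$, contradicting its minimality, or (ii) would force multiplication of $w$ by the inverse generator $g_0^{-1}$ to require an added caret, contradicting the subtree condition for $g_0^{-1}$ already established above. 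Either way we reach a contradiction, so the minimality condition holds for every $g$.

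I expect the main obstacle to be precisely this last case analysis. The subtree condition and the tree-shape assertion are nearly immediate, but establishing minimality requires tracking, for each of the six local reconfigurations of the negative tree, exactly how the set of exposed carets changes, and confirming that no new cancellation against the unchanged positive tree can occur without violating either the minimality of $(T_-,T_+)$ or the subtree condition for the inverse generator. Getting the bookkeeping of exposed carets right across all six cases—especially degenerate configurations where the reconfigured caret is adjacent to $\wedge_E$—is where the real work lies.
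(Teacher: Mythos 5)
Your proposal is correct and follows essentially the same route as the paper: the subtree condition is forced by Definition \ref{deadends} together with part (1) of Theorem \ref{addcaret}, the required form of the negative tree then follows from Remark \ref{caretchange}, and minimality is reduced (via Remark \ref{noaddedcarets}) to checking that none of the generator ``actions'' can newly expose a caret in $(Tg)_-$. The paper carries out that last check directly by inspecting Figures \ref{genaction}, \ref{wxnot}, \ref{wxnotinv}, \ref{wxi}, \ref{wxiinv}, \ref{wxp} and \ref{wxpinv}, whereas you phrase it contrapositively and attribute the obstruction to the subtree condition for $g^{-1}$, but the underlying case-by-case bookkeeping is the same.
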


\begin{proof}
A minimal $(p+1)$--ary tree-pair diagram representing an
arbitrary element $x\in F(p+1)$ will have a negative tree of
this form if and only if $x$ and $g\in X\cup X^{-1}$ satisfy
the subtree condition (see Remark \ref{caretchange}).  For an
arbitrary dead end $w$, we cannot have $|wg|>|w|$, so by
Theorem \ref{addcaret}, $w$ must satisfy the subtree conditions
with respect to all possible $g$.

\par The fact that $w=(T_-,T_+)$ satisfies the minimality
condition with respect to all possible $g$ follows directly
from the fact that it satisfies the subtree condition.  The
subtree condition implies that $T_+=(Tg)_+$, and therefore, the
only way in which exposed caret pairs may exist in
$((Tg)_-,(Tg)_+)$, is if the ``action" of $g$ on $(T_-,T_+)$
causes carets to be exposed in $(Tg)_-$ which were not exposed
in $T_-$.  However, if we consider the ``action" of each $g$ on
the negative tree of $w$, which must be of the form given in
Figure \ref{negtree}, we can see that for all $g\in X\cup
X^{-1}$, the only carets which will be exposed in $(Tg)_-$ are
those which are also exposed in $T_-$ (see Figure
\ref{genaction}, or consider Figures \ref{wxnot},
\ref{wxnotinv}, \ref{wxi}, \ref{wxiinv}, \ref{wxp} and
\ref{wxpinv} which follow). Therefore $((Tg)_-,(Tg)_+)$ is
minimal for all $g$.
\end{proof}

\begin{cor}\label{onlyonecaret}
For all dead ends $w=(T_-,T_+)$ and all $g\in X\cup X^{-1}$,
the ``action" of $g$  on $(T_-,T_+)$ only changes the type of
one caret in $T_-$ and leaves the types of all carets in $T_+$
unchanged.
\end{cor}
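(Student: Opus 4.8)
The plan is to obtain this statement as an immediate corollary of Lemma~\ref{negtreeform} combined with Fordham's Theorem~\ref{onecaret} and Remark~\ref{noaddedcarets}; essentially all of the genuine work has already been carried out in establishing Lemma~\ref{negtreeform}, so the corollary is really just a matter of assembling those pieces.

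First I would fix an arbitrary dead end $w=(T_-,T_+)$ and an arbitrary generator $g\in X\cup X^{-1}$. Lemma~\ref{negtreeform} guarantees that $w$ and $g$ satisfy \emph{both} the subtree and the minimality conditions, so the hypotheses of Theorem~\ref{onecaret} are met. That theorem then yields exactly one index $i$ with $\tau_{T_-}(\wedge_i)\ne\tau_{(Tg)_-}(\wedge_i)$ and $\tau_{T_-}(\wedge_j)=\tau_{(Tg)_-}(\wedge_j)$ for all $j\ne i$; in other words, precisely one caret in the negative tree changes type under the ``action'' of $g$.

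To handle the positive tree I would appeal to Remark~\ref{noaddedcarets}: since the subtree condition holds for $w$ and $g$, the type of $\wedge_i$ agrees in $T_+$ and $(Tg)_+$ for every index $i$, so no caret type in the positive tree is altered by multiplication by $g$. Combining these two observations gives the claim, and since $w$ and $g$ were arbitrary it holds for every dead end and every $g\in X\cup X^{-1}$.

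There is no serious obstacle here beyond confirming that the hypotheses line up. The one point requiring care, already dispatched in Lemma~\ref{negtreeform}, is that one must know \emph{a priori} that a dead end satisfies both the subtree and minimality conditions for \emph{all} generators, since Theorem~\ref{onecaret} and Remark~\ref{noaddedcarets} apply only when those conditions hold; were either condition to fail for some $g$, Theorem~\ref{addcaret} would force a length change incompatible with $w$ being a dead end. Once Lemma~\ref{negtreeform} is in hand, the corollary follows purely by invoking these two results and unwinding the definitions.
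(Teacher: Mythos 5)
Your proposal is correct and matches the paper's proof exactly: the paper also derives the corollary immediately from Lemma~\ref{negtreeform}, Theorem~\ref{onecaret}, and Remark~\ref{noaddedcarets}, with Lemma~\ref{negtreeform} supplying the subtree and minimality hypotheses for every $g\in X\cup X^{-1}$. Your write-up simply spells out the assembly in more detail than the paper does.
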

\begin{proof}
This follows immediately from Lemma \ref{negtreeform}, Remark
\ref{noaddedcarets} and Theorem \ref{onecaret}.
\end{proof}

\par So now we can proceed to prove Theorem \ref{deadenddiag}
by observing which caret changes type in the tree-pair diagram
when each $g$ ``acts" on an arbitrary dead end $w=(T_-,T_+)$ and
then enumerating those conditions which must be met by
$(T_-,T_+)$ in order for this type change to result in a
decrease in length (we note that length cannot remain unchanged
after multiplication by $g$ because in $F(p+1)$ all relators
are of even length).  By showing that these conditions will be
met if and only if $w$ satisfies those conditions laid out in
Theorem \ref{deadenddiag}, we will conclude our proof of the
theorem.  Before continuing with our proof, we first introduce
some notation.

\begin{nota}[$\tau(\wedge_j)$ and
$\Delta_g(\wedge_j)$]\label{wtnot}
$\tau_{T_+}(\wedge_j)$ and $\tau_{(T_-,T_+)}(\wedge_j)$
represent the type of the caret $\wedge_j$ in the tree $T_+$
and the the type pair of the caret pair $\wedge_j$ in the
tree-pair diagram $(T_-,T_+)$, respectively.

$\Delta_g(\wedge_j)$ denotes the change in weight of the caret
pair $\wedge_j$ during multiplication by some $g\in X\cup
X^{-1}$, where the original tree-pair diagram and the resulting
tree-pair diagram should be clear from the context.
\end{nota}

\begin{proof}[Proof of Theorem \ref{deadenddiag}]
\par We consider multiplying our dead end element $w=(T_-,T_+)$
by each
$g\in X\cup X^{-1}$ and enumerate which caret in the negative
tree has its type changed by this multiplication and the effect
of this change on the length of the element (see Table
\ref{weighttable}).

\par For a clearer organizational structure, we organize this
process by the caret in $T_-$ which is affected by the
multiplication.  The labeled carets in $T_-$ are (see Figure
\ref{negtree}): $\wedge_A,\wedge_B\wedge_{C^i}\hbox{ for
}i=1,...,p-1,\wedge_D,\wedge_E,\wedge_F$.  To see which $g$
affects which caret pair in $(T_-,T_+)$, we consult Remark
\ref{caretchange}.

\begin{enumerate}
\item Conditions on $\wedge_A$ in $(T_-,T_+)$: We know from
Remark \ref{caretchange} that there is no $g\in X\cup
X^{-1}$ which will change the type of $\wedge_A$ in the
negative tree, so we have no conditions on the type of
this caret unless they are imposed by the required
types of other carets within the tree.  By definition
$\wedge_A$ is of type \lft\ in $T_-$.  In $T_+$, the
only conditions on $\wedge_A$ will come from the
conditions imposed on $\wedge_B$ (see (\ref{B}));
because $\wedge_B$ in $T_+$ must be of type \lft\ and
since $\wedge_A$ is a predecessor of $\wedge_B$,
$\wedge_A$ in $T_+$ must be of type \lft\ or of type
\m\ with an ancestor of type \lft.

\item \label{B} Conditions on $\wedge_B$ in $(T_-,T_+)$: We
know from Remark \ref{caretchange} that only $x_0$ will
change the type of $\wedge_B$ in the negative tree,
from type \lftl\ to type \rt. If we look
at $(T_-,T_+)$, we can see that in this case we can
compute the types more specifically: $x_0$ will change
the type of $\wedge_B$ in the negative tree from type
\lftl\ to type \rtone\ because $\wedge_B$'s leftmost
child successor is $\wedge_{C^1}$, which is of type
$\m^1$ (see Figure \ref{wxnot}).   Table \ref{wx0table}
lists the change in weight (taken from Table
\ref{weighttable}) of this caret pair for each possible
caret type pair of $\wedge_B$. From this table we
conclude that $\wedge_B$ in $T_+$ must be of type
\lftl\ because this is the only caret pairing in
$(T_-,T_+)$ for $\wedge_B$ which will result in
$|wx_0|<|w|$.

\begin {table}[t]
\centering \caption{How $x_0$ ``acts" on $w(\wedge_B)$ in
arbitrary dead end $w=(T_-,T_+)$, listed by possible types
of $\wedge_B\in T_+$.  Here $\tau_{T_-}(\wedge_B)=\lftl$.}
\begin{tabular}{clccc}
  \hline
  $\tau_{T_+}(\wedge_B)$ & \vline &
$\tau_{(T_-,T_+)}(\wedge_B)$ &
$\tau_{((Tx_0)_-,(Tx_0)_+)}(\wedge_B)$ &
$\Delta_{x_0}(\wedge_B)$ \\
  \hline
  \lftl\ & \vline & (\lftl,\lftl) & (\rtone,\lftl) & -1 \\
  \rtnot\ & \vline & (\lftl,\rtnot) & (\rtone,\rtnot) & 1
\\
  \rtr\ & \vline & (\lftl,\rtr) & (\rtone,\rtr) & 1 \\
  \rtj\ & \vline & (\lftl,\rtj) & (\rtone,\rtj) & 1 \\
  \minot\ & \vline & (\lftl,\minot) & (\rtone,\minot) & 1
\\
  \mij\ & \vline & (\lftl,\mij) & (\rtone,\mij) & 1 \\
  \hline
\end{tabular}
\label{wx0table}
\end{table}

\begin{figure}[t]
\centering
\includegraphics[width=2.5in]{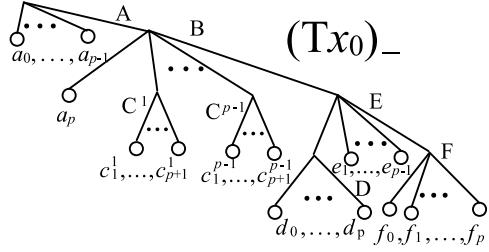}
\caption{$(Tx_0)_-$ (where $(Tx_0)_+=T_+$).}\label{wxnot}
\end{figure}

\item \label{C} Conditions on $\wedge_{C^i}$ in $(T_-,T_+)$
for $i=1,2,...,p-1$: We know from Remark
\ref{caretchange} that only $x_i$ will change the type
of $\wedge_{C^i}$ in the negative tree, from type \mi\
to type \rt\ (see Figure \ref{wxi}). First we enumerate
the conditions imposed by the specific subtype of
$\wedge_{C^i}$ in $T_-$ on the specific subtype of
$\wedge_{C^i}$ in $(Tx_i)_-$ (in Figure \ref{wxi}).
First we note that in both $T_-$ and $(Tx_i)_-$, in
$\wedge_{C^i}$, the child carets in the subtrees
$c_1^i,...,c_{p-i+1}^i$ (if they are nonempty) will be
predecessors of $\wedge_{C^i}$ and the child carets in
the subtrees $c_{p-i+2}^i,...,c_{p+1}^i$ (if they are
nonempty) will be successors of $\wedge_{C^i}$ (see
Figure \ref{type2}).  Additionally, the
root caret of the subtrees $c_1^i,...,c_{p-i+1}^i$ (if
they exist) will have caret types $\m^i,...,\m^p$
respectively, and the root carets of the subtrees
$c_{p-i+2}^i,...,c_{p+1}^i$ (if they exist) will have
caret types $\m^1,...,\m^i$ respectively (see Figure
\ref{type}).

    \begin {enumerate}
        \item If $\tau_{T_-}(\wedge_{C^i})=\minot$, then
the subtrees $c_{p-i+2}^i,...,c_{p+1}^i$ are
all empty, which implies that
$\wedge_{C^{i+1}}$ is the leftmost child
successor of $\wedge_{C^i}$.  Since
$\tau(\wedge_{C^{i+1}})=\m^{i+1}$,
$\tau_{(Tx_i)_-}(\wedge_{C^i})=\riplusone$.
        \item If $\tau_{T_-}(\wedge_{C^i})=\mij$, then the
leftmost child successor of $\wedge_{C^i}$ in
$T_-$ is the root caret of the subtree $c_j^i$,
which implies that the subtrees
$c_{p-i+2}^i,...,c_{j-1}^i$ are all empty.  So
the leftmost child successor of $\wedge_{C^i}$
in $(Tx_i)_-$ will also be the root of subtree
$c_j^i$, which is of type $\m^j$, so
$\tau_{(Tx_i)_-}(\wedge_{C^i})=\rtj$.
    \end{enumerate}

    Table \ref{wxitable} lists the change in weight (taken from
Table \ref{weighttable}) of this caret pair $\wedge_{C^i}$
when $\tau_{T_-}(\wedge_{C^i})=\minot$; When
$\tau_{T_-}(\wedge_{C^i})=\mij$, the change in caret type
of $\wedge_{C^i}$ from \mij\ to \rtj\ results in a decrease
in caret weight no matter what the type of $\wedge_{C^i}$
in $T_+$, so we conclude that if
$\tau_{T_-}(\wedge_{C^i})=\mij$, then $\wedge_{C^i}$ in
$T_+$ may be of any type.  If
$\tau_{T_-}(\wedge_{C^i})=\minot$, then we can see from Table
\ref{wxitable} that $\wedge_{C^i}$ in $T_+$ may be of type
\lftl, \rtk\ or  \mknot, or \mrs\ for $k,r,s\le i$.

\begin {table}[t]
\centering \caption{How $x_i$ (for $i=1,2,...,p-1$), when
$\tau_{T_-}(\wedge_{C^i})=\minot$, ``acts" on $w(\wedge_{C^i})$
in arbitrary dead end $w=(T_-,T_+)$, listed by possible types
of $\wedge_{C^i}\in T_+$.  }
\begin{tabular}{clccc}
  \hline
  $\tau_{T_+}(\wedge_{C^i})$ & \vline &
$\tau_{(T_-,T_+)}(\wedge_{C^i})$ &
$\tau_{((Tx_i)_-,(Tx_i)_+)}(\wedge_{C^i})$ &
$\Delta_{x_i}(\wedge_{C^i})$ \\
  \hline
  \lftl\ & \vline & (\minot,\lftl) & (\riplusone,\lftl) & -1
\\
  \rtnot\ & \vline & (\minot,\rtnot) & (\riplusone,\rtnot) & 1
\\
  \rtr\ & \vline & (\minot,\rtr) & (\riplusone,\rtr) & 1 \\
  \rtj\ & \vline & (\minot,\rtj) & (\riplusone,\rtj) &
$\displaystyle^{-1 \hbox{ for } j\le i}_{\hbox{
}\hbox{ }1 \hbox{ for } j>i}$ \\
  \mknot\ & \vline & (\minot,\mknot) & (\riplusone,\mknot) &
$\displaystyle^{-1 \hbox{ for } k\le i}_{\hbox{
}\hbox{ }1 \hbox{ for } k>i}$ \\
  $\m^l_m$ & \vline & (\minot,$\m^l_m$) & (\riplusone,$\m^l_m$)
& $\displaystyle^{-1 \hbox{ for } l\le i}_{\hbox{
}\hbox{ }1 \hbox{ for } l>i}$ \\
  \hline
\end{tabular}
\label{wxitable}
\end{table}

\begin{figure}[t]
\centering
\includegraphics[width=3in]{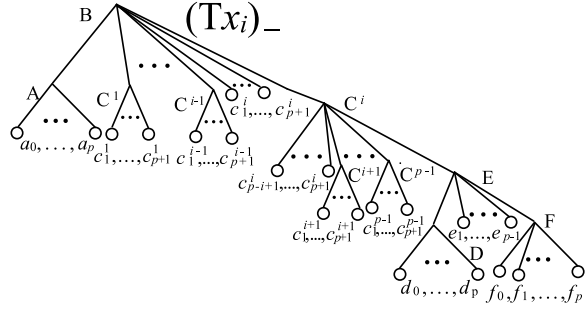}
\caption{$(Tx_i)_-$ when $i=1,...,p-1$
(where $(Tx_i)_+=T_+$).  }\label{wxi}
\end{figure}

\item \label{D} Conditions on $\wedge_D$ in $(T_-,T_+)$: We
    know from Remark \ref{caretchange} that only $x_p$ will
    change the type of $\wedge_D$ in the negative tree, from
    type \mpp\ to type \rt\ (see Figure \ref{wxp}). First we
    enumerate the conditions which determine the subtype of
    $\wedge_D$ in $T_-$ and the conditions imposed by that
    specific subtype of $\wedge_D$ in $T_-$ on the specific
    subtype of $\wedge_D$ in $(Tx_i)_-$ in Figure \ref{wxp}.  First we note that in both $T_-$ and $(Tx_p)_-$, in
$\wedge_D$, the child carets in the subtree $d_0$ (if nonempty)
will be predecessors of $\wedge_D$ and the child carets in the
subtrees $d_1,...,d_p$ (if nonempty) will be successors of
$\wedge_D$ (see Figure \ref{type2}). Additionally, the root
caret of the subtrees $d_0,d_1...,d_p$ (if they exist) will
have caret types $\m^p,\m^1,...,\m^p$ respectively (see Figure
\ref{type}).

    \begin {enumerate}
        \item If $d_j$ is a leaf for all $j\in\{1,...,p\}$,
then $\tau_{T_-}(\wedge_D)=\mpnot$, because $\wedge_D\in T_-$
will have no child successors (see Figure \ref{deadenddiag}),
and $\tau_{(Tx_p)_-}(\wedge_D)=\rtr$ or \rtnot\ since the
leftmost child successor of $\wedge_D\in (Tx_p)_-$ will be
$\wedge_E$, which will also be $\wedge_D$'s immediate successor
(see Figure \ref{wxp}).
        \item If there is a $j\in \{1,...,p\}$ such that $d_j$
is not a leaf, then $\tau_{T_-}(\wedge_D)=\mpi$, where
$i=min\{j|d_j\hbox{ is not a leaf}\}$, and
$\tau_{(Tx_p)_-}(\wedge_D)=\rti$, because  when $j<p$, the root
of the subtree $d_i$ will be the leftmost child successor of
$\wedge_D$ in both $T_-$ and $(Tx_p)_-$, and will be of type
$\m^i$ in both trees, and when $j=p$, the leftmost child
successor of $\wedge_D$ will be the root of the subtree $d_i$
(type $\m^i$) in $T_-$ and will be $\wedge_E$ (type \rt) in
$(Tx_p)_-$, and the immediate successor of $\wedge_D$ will be
in the subtree $\m^i$ in both trees (see Figures \ref{type} and
\ref{type2}).
    \end{enumerate}

    Table \ref{wxptable} lists the change in weight (taken from
Table \ref{weighttable}) of this caret pair $\wedge_D$ when
$\tau_{T_-}(\wedge_D)=\mpnot$; When
$\tau_{T_-}(\wedge_D)=\mpj$, the change in caret type of
$\wedge_D$ from \mpj\ to \rtj\ decreases caret weight, no
matter what the type of $\wedge_D$ in $T_+$, so if
$\tau_{T_-}(\wedge_D)=\mpj$, then $\wedge_D$ in $T_+$ may be of
any type.  If $\tau_{T_-}(\wedge_D)=\mpnot$, then we can see
from Table \ref{wxptable} that $\wedge_D$ in $T_+$ must be of
type \lftl, \rtj, \mknot\ or \mkl, where
$j,k,l\in\{1,2,...,p\}$. $|wx_p|<|w|$.

\begin {table}[t]
\centering \caption{How $x_p$, when
$\tau_{T_-}(\wedge_D)=\mpnot$, ``acts" on $w(\wedge_D)$ in
arbitrary dead end $w=(T_-,T_+)$, listed by possible types of
$\wedge_D\in T_+$.  Case 1 is when
$\tau_{(Tx_p)_-}(\wedge_D)=\rtr$, and case 2 is when
$\tau_{(Tx_p)_-}(\wedge_D)=\rtnot$.}
\begin{tabular}{clccccc}
  \hline
  $\tau_{T_+}(\wedge_D)$ & \vline &
$\tau_{(T_-,T_+)}(\wedge_D)$ &
\multicolumn{2}{c}{$\tau_{((Tx_p)_-,(Tx_p)_+)}(\wedge_{D})$}
& \multicolumn{2}{c}{$\Delta_{x_p}(\wedge_D)$} \\
     & \vline &   & case 1 & case 2 & case 1 & case 2 \\
  \hline
  \lftl\ & \vline & (\mpnot,\lftl) & (\rtr,\lftl) &
(\rtnot,\lftl) & -1 & -1 \\
  \rtnot\ & \vline & (\mpnot,\rtnot) & (\rtr,\rtnot) &
(\rtnot,\rtnot) & 1 & 1\\
  \rtr\ & \vline & (\mpnot,\rtr) & (\rtr,\rtr) &
(\rtnot,\rtr) & 1 & 1 \\
  \rtj\ & \vline & (\mpnot,\rtj) & (\rtr,\rtj) &
(\rtnot,\rtj) & -1 & -1 \\
  \minot\ & \vline & (\mpnot,\minot) & (\rtr,\minot) &
(\rtnot,\minot) & -1 & -1 \\
  \mij\ & \vline & (\mpnot,\mij) & (\rtr,\mij) &
(\rtnot,\mij) & -1 & -1 \\
  \hline
\end{tabular}
\label{wxptable}
\end{table}

\begin{figure}[t]
\centering
\includegraphics[width=2.75in]{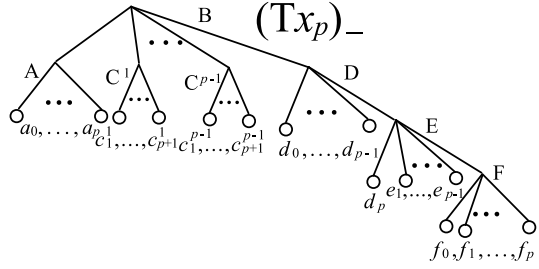}
\caption{$(Tx_p)_-$ (where $(Tx_p)_+=T_+$).  }\label{wxp}
\end{figure}

\item \label{E} Conditions on $\wedge_E$ in $(T_-,T_+)$: We
    know from Remark \ref{caretchange} that $x_i^{-1}$ for
    $i=0,1,2,...,p$ will change the type of $\wedge_E$ in the
    negative tree, from type \rt\ to type \lftl\ when $i=0$
    (see Figure \ref{wxnotinv}) and \mi\ when $i>0$ (see
    Figures \ref{wxiinv} and \ref{wxpinv}). First we enumerate
    the conditions that determine the subtype of
    $\wedge_E$ in $T_-$ (which is $\rt$) and in $(Tx_i)_-$,
    (which is \lftl\ when $i=1$ and \mi\ when $i>0$)
    by considering Figures \ref{negtree}, \ref{wxnotinv}, \ref{wxiinv} and \ref{wxpinv}.  To understand this set of
    conditions, see Figures \ref{type} and \ref{type2}.  Here we also define $e_p=f_0$.

\begin {enumerate}
\item If $e_k$ is a non-empty subtree in $T_-$ for some $k\in
    \{1,...,p\}$, then:
    \begin{enumerate}
    \item The type of $\wedge_E$ in $T_-$ is \rtj\ (where
        $j=min\{k|e_k\hbox{ is nonempty}\}$), because when
        $j<p$, the root of $e_j$ (which is type $\m^j$)
        will be the leftmost child successor of $\wedge_E$,
        and when $j=p$, $\wedge_F$ (which is type \rt) will
        be the leftmost child successor of $\wedge_E$ and
        the immediate successor of $\wedge_E$ will be in
        $e_j$ (and thus not type \rt).
        \item They type of $\wedge_E$ in $(Tx_i^{-1})_-$ is
            \lftl\ for $i=0$ and \mij\ for $i>0$, because
            the leftmost child successor of $\wedge_E$ in
            $(Tx_i^{-1})_-$ is the root of the subtree
            $e_j$, which is of type $\m^j$ (see Figures
            \ref{type} and \ref{type2}).
    \end{enumerate}
\item If $e_k$ is a leaf in $T_-$ for all $k\in \{1,...,p\}$,
    then $\wedge_F$ (which is type \rt) will be the
        immediate successor of $\wedge_E$ in both $T_-$ and $(Tx_i^{-1})_-$:
    \begin{enumerate}
    \item The type of $\wedge_E$ in $T_-$ is \rtnot\ when
        $\wedge_F$ in $T_-$ is type \rtnot\ and \rtr\
        otherwise.  If $\wedge_F$ in $T_-$ is type \rtnot,
        then all of the successors of $\wedge_F$ are type
        \rt, and thus all successors of $\wedge_E$ must
        also be type \rt.  If $\wedge_F$ in $T_-$ is not of
        type \rtnot, then there exists at least one
        successor of $\wedge_F$, and of $\wedge_E$ by
        extension, which is not of type \rt.
        \item The type of $\wedge_E$ in $(Tx_i^{-1})_-$ is
            \lftl\ for $i=0$ and \minot\ for $i>0$,
            because $\wedge_E$ will have no nonempty child
            successor in $(Tx_i^{-1})_-$.
        \end{enumerate}
\end{enumerate}

    \par Table \ref{wx0invtable} lists the change in weight
(taken from Table \ref{weighttable}) of $\wedge_E$ when $i=0$,
and Table \ref{wxiinvtable} lists the change in weight of
$\wedge_E$ when $i>0$.  So now we proceed to outline the
possible caret types of $\wedge_E$ in $(T_-,T_+)$ which result
in reduced length after multiplication by $x_i^{-1}$ for
$i=0,...,p$.

\begin {table}[t]
\addtolength{\tabcolsep}{-2pt} \centering \caption{How
$x_0^{-1}$ ``acts" on $w(\wedge_E)$ in arbitrary dead end
$w=(T_-,T_+)$, listed by possible types of $\wedge_E\in T_+$.
Case 1 is when $\tau_{T_-}(\wedge_E)=\rtnot$, case 2 is when
$\tau_{T_-}(\wedge_E)=\rtr$, and case 3 is when
$\tau_{T_-}(\wedge_E)=\rtj$.}
\begin{tabular}{clccccccc}
  \hline
  $\tau_{T_+}(\wedge_E)$ & \vline &
\multicolumn{3}{c}{$\tau_{(T_-,T_+)}(\wedge_E)$} &
$\tau(\wedge_E)$ &
\multicolumn{3}{c}{$\Delta_{x_0^{-1}}(\wedge_E)$} \\
     & \vline & case 1 & case 2 & case 3 & in $wx_0^{-1}$ &
case 1 & case 2 & case 3 \\
  \hline
  \lftl\ & \vline & (\rtnot,\lftl) & (\rtr,\lftl) &
(\rtj,\lftl) & (\lftl,\lftl) & 1 & 1 & 1 \\
  \rtnot\ & \vline & (\rtnot,\rtnot) & (\rtr,\rtnot) &
(\rtj,\rtnot) & (\lftl,\rtnot) & 1 & -1 & -1 \\
  \rtr\ & \vline & (\rtnot,\rtr) & (\rtr,\rtr) & (\rtj,\rtr) &
(\lftl,\rtr) & -1 & -1 & -1\\
  \rtk\ & \vline & (\rtnot,\rtk) & (\rtr,\rtk) & (\rtj,\rtk) &
(\lftl,\rtk) & -1 & -1 & -1 \\
  \minot\ & \vline & (\rtnot,\minot) & (\rtr,\minot) &
(\rtj,\minot) & (\lftl,\minot) & 1 & 1 &
$\displaystyle^{\hbox{ }\hbox{ }1 \hbox{ for } i<j}_{-1
\hbox{ for } i\ge j}$ \\
  \mlk\ & \vline & (\rtnot,\mlk) & (\rtr,\mlk) & (\rtj,\mlk) &
(\lftl,\mlk) & -1 & -1 & -1 \\
  \hline
\end{tabular}
\label{wx0invtable}
\end{table}

\begin {table}[t]
\addtolength{\tabcolsep}{-2pt} \centering \caption{How
$x_i^{-1}$ (for $i=1,2,...,p$) ``acts" on $w(\wedge_E)$ in
arbitrary dead end $w=(T_-,T_+)$, listed by possible types of
$\wedge_E\in T_+$.  Case 1 is when
$\tau_{T_-}(\wedge_E)=\rtnot$, case 2 is when
$\tau_{T_-}(\wedge_E)=\rtr$, and case 3 is when
$\tau_{T_-}(\wedge_E)=\rtj$ (where $j>i$).}
\begin{tabular}{clccccccc}
  \hline
  $\tau_{T_+}(\wedge_E)$ & \vline &
\multicolumn{3}{c}{$\tau_{(T_-,T_+)}(\wedge_E)$} &
$\tau(\wedge_E)$ &
\multicolumn{3}{c}{$\Delta_{x_i^{-1}}(\wedge_E)$} \\
     & \vline & case 1 & case 2 & case 3 & in $wx_i^{-1}$ &
case 1 & case 2 & case 3 \\
  \hline
  \lftl\ & \vline & (\rtnot,\lftl) & (\rtr,\lftl) &
(\rtj,\lftl) & (\minot,\lftl) & 1 & 1 & 1 \\
  \rtnot\ & \vline & (\rtnot,\rtnot) & (\rtr,\rtnot) &
(\rtj,\rtnot) & (\minot,\rtnot) & 1 & -1 & -1 \\
  \rtr\ & \vline & (\rtnot,\rtr) & (\rtr,\rtr) & (\rtj,\rtr) &
(\minot,\rtr) & -1 & -1 & -1\\
  \rtk\ & \vline & (\rtnot,\rtk) & (\rtr,\rtk) & (\rtj,\rtk) &
(\minot,\rtk) & \multicolumn{3}{c}{$1 \hbox{ for } k\le i, -1
\hbox{ for } k>i$} \\
  $\m^l_\emptyset$ & \vline & (\rtnot,$\m^l_\emptyset$) &
(\rtr,$\m^l_\emptyset$) & (\rtj,$\m^l_\emptyset$) &
(\minot,$\m^l_\emptyset$) & 1 & 1 & $\displaystyle^{\hbox{
}\hbox{ }1 \hbox{ for } l<j}_{-1 \hbox{ for } l\ge j}$ \\
  $\m^m_n$ & \vline & (\rtnot,$\m^m_n$) & (\rtr,$\m^m_n$) &
(\rtj,$\m^m_n$) & (\minot,$\m^m_n$) & \multicolumn{3}{c}{$1
\hbox{ for } m\le i, -1 \hbox{ for } m>i$} \\
  \hline
\end{tabular}
\label{wxiinvtable}
\end{table}

\begin{figure}[t]
\centering
\includegraphics[width=2.75in]{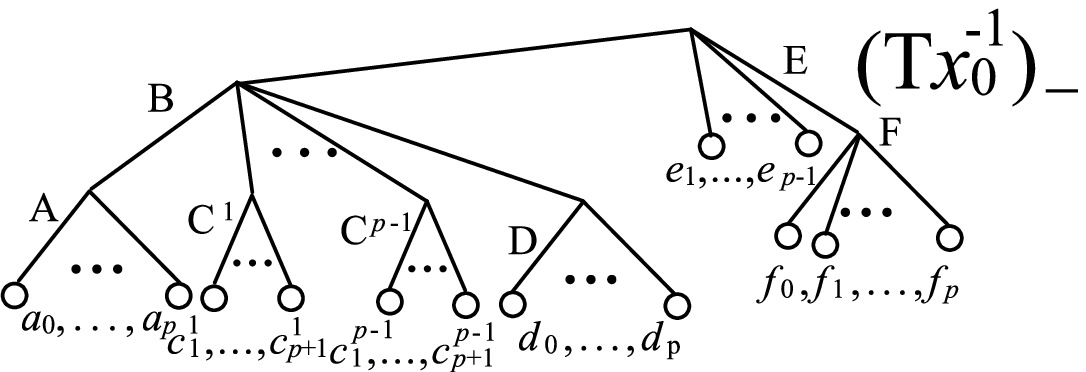}
\caption{$(Tx_0^{-1})_-$ (where $(Tx_0^{-1})_+=T_+$).}\label{wxnotinv}
\end{figure}

\begin{figure}[t]
\centering
\includegraphics[width=3in]{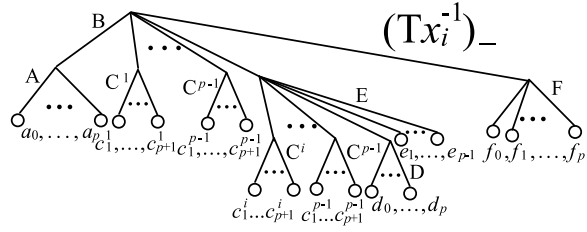}
\caption{$(Tx_i^{-1})_-$ (where $(Tx_i^{-1})_+=T_+$).}\label{wxiinv}
\end{figure}

\begin{figure}[t]
\centering
\includegraphics[width=2.25in]{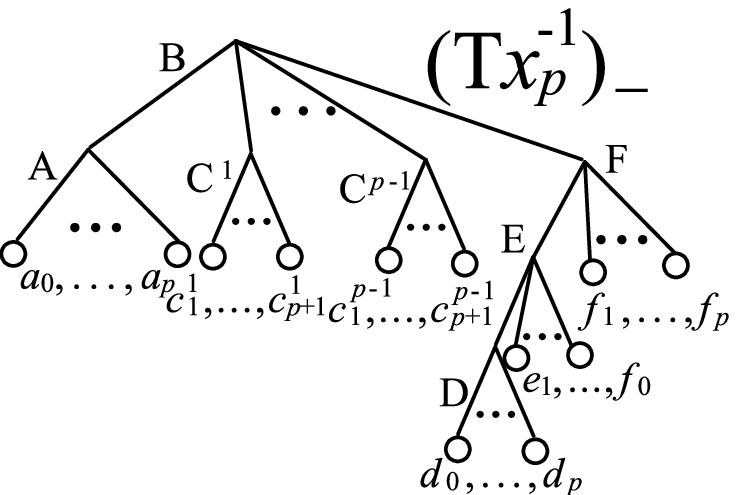}
\caption{$(Tx_p^{-1})_-$ (where $(Tx_p^{-1})_+=T_+$).}\label{wxpinv}
\end{figure}

\par From Tables \ref{wx0invtable} and \ref{wxiinvtable}, we
have the following sets of conditions.
\begin {enumerate}
\Alph{enumii} \item \label{iequals0} $i=0$: The possible caret
pairings for $\wedge_E$ in $(T_-,T_+)$, determined because the
weight of $\wedge_E$ decreases after multiplication by
$x_0^{-1}$ (see Table \ref{wx0invtable}) are:
\begin{enumerate}
\item $(\rt,\rt)$ excluding $(\rtnot,\rtnot)$ \item
$(\rt,\m^t_u)$ \item $(\rtj,\mlnot)$ such that $l\ge
j$
\end{enumerate}

\item \label{igreaterthan0} $i>0$: We define the variable
$\rt'\in \{\rtnot,\rtr,\rtj|j>i\}$.  The possible caret
type pairs for $\wedge_E$ in $(T_-,T_+)$, determined
because the weight of $\wedge_E$ decreases after
multiplication by $x_i^{-1}$ where $i\in\{1,2,3,...,p\}$
(see Table \ref{wxiinvtable}) are:
\begin{enumerate}
\item (\rtr,\rtnot) \item (\rtj,\rtnot) where $j>i$ \item
$(\rt',\rtr)$ \item $(\rt',\rtk)$ where $k>i$ \item
(\rtj, \mlnot) where $j>i$ and $l\ge j$ \item
$(\rt',\mrs)$ where $s>i$ (and if $\rt'=\rtj$, then
$r\ge j$)
\end{enumerate}
\end{enumerate}
\par We note that multiplying by {\it each} $x_i^{-1}$ for
$i=0,1,2,...,p$ imposes its own set of conditions on the type
pair of $\wedge_E$. In order for $w$ to be a dead end, the
caret $\wedge_E$ in $w=(T_-,T_+)$ must satisfy {\it all} $p+1$
sets of conditions, because its length must be reduced whenever
we multiply by $x_i^{-1}$ for {\it any} $i\in\{0,...,p\}$. We
note that
$\cap\displaystyle^p_{i=0}\left\{(\rtj,*)|j>i\right\}=\emptyset$
and
$\cap\displaystyle^p_{i=0}\left\{(\mrs,*)|s>i\right\}=\emptyset$
for any caret type $*$, so taking the intersection of the set
of possible caret type pairs for all $i\in\{0,...,p\}$ given in
\ref{iequals0} and \ref{igreaterthan0} yields:
$$(\rtnot,\rtr), (\rtr,\rtnot), (\rtr,\rtr)$$

\par These are the only type pairs for $\wedge_E$ which will
result in $|wx_i^{-1}|<|w|$ for {\it all} $i\in\{0,...,p\}$, and since $\wedge_E$
is of type \rtnot\ or \rtr\ in both $T_-$ and $T_+$, each
$e_1,...,e_p$ must be a leaf in both $T_-$ and $T_+$.

\item Conditions on $\wedge_F$ in $(T_-,T_+)$: We know from
    Remark \ref{caretchange} that there is no $g\in X\cup
    X^{-1}$ which will change the type of $\wedge_F$ in the
    negative tree, so we have no conditions on the type of this
    caret unless they are imposed by the required types of
    other carets within the tree.  By definition $\wedge_F$ is
    of type \rt\ in $T_-$.  Since $e_1,...,e_{p-1},f_0$ must
    all be leaves in $T_-$ and $T_+$ (see \ref{E}), $\wedge_F$ is the
    immediate successor of $\wedge_E$, so $\wedge_F$
    must be type \rt\ in $T_+$.
\end{enumerate}

\par We summarize the possible caret pairings outlined above
for each of the labeled
carets in $(T_-,T_+)$ in Table \ref{summary}.  These are precisely the conditions met by Figure \ref{deadend}.

\begin {table}[t]
\centering \caption{Possible caret pairings for labeled carets
in a dead end $w=(T_-,T_+)$.  Here * can be any caret type.  }
\begin{tabular}{cclccc}
  \hline
  $\wedge_A$ & $\wedge_B$ & $\wedge_{C^i}$, $i-1,...,p-1$ &
$\wedge_D$ & $\wedge_E$ & $\wedge_F$ \\
  \hline
  (\lft,\lft) & (\lftl,\lftl) & (\mi,\lftl) & (\mpp,*), &
$(\rtnot,\rtr)$ & (\rt,\rt) \\
  (\lftl,\m) &   & (\minot,\rtk) for $k\le i$ & except &
$(\rtr,\rtnot)$ &   \\
    &   & (\minot,\mlnot) for $l\le i$ & (\mpnot,\rtr) &
$(\rtr,\rtr)$ &    \\
    &   & (\minot,\mrs) for $r,s\le i$ & or &   &   \\
    &   & (\mij,*) & (\mpnot, \rtnot) &   &   \\
  \hline
\end{tabular}
\label{summary}
\end{table}
\end{proof}

\subsection{Depth of dead ends}
\begin{depthdeadends}
All dead ends in $F(p+1)$ have depth 2 with respect to $X$. Or, there
are no k--pockets in $F(p+1)$ for $k\ne2$.
\end{depthdeadends}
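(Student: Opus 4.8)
The plan is to prove the two bounds $\mathrm{depth}(w)\ge 2$ and $\mathrm{depth}(w)\le 2$ separately, for an arbitrary dead end $w$ with $|w|=n$ whose minimal tree-pair diagram has the form of Figure \ref{deadend}. The lower bound is essentially free. Since $w$ is a dead end, $|wg_1|\le n$ for every $g_1\in X\cup X^{-1}$; because every relator of $F(p+1)$ has even length, $|wg_1|$ and $n$ have opposite parity, so $|wg_1|\ne n$, and together with the triangle inequality $|wg_1|\ge n-1$ this forces $|wg_1|=n-1$ for all $g_1$. Consequently $|wg_1g_2|\le |wg_1|+1=n$ for every pair $g_1,g_2$, so no path of length at most $2$ starting at $w$ can leave $B_n$, and the depth is at least $2$.

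It then remains only to exhibit a single path of length $3$ that leaves $B_n$; together with the lower bound this pins the depth at exactly $2$, since $m=2$ is then the smallest value for which all paths of length $\le m$ remain in $B_n$ while some path of length $m+1$ escapes. By the length computation above such a path must follow the profile $n\to n-1\to n\to n+1$: the first step is forced downward, the second step can return the length to at most $n$, and only the third step can exceed $n$. I would therefore (i) choose $g_1$ realizing the forced decrease to $w'=wg_1$ of length $n-1$; (ii) choose a second generator $g_2$ so that $v=w'g_2$ again has length exactly $n$; and (iii) produce $g_3$ with $|vg_3|>n$, so that $wg_1g_2g_3\notin B_n$.

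The mechanism for step (iii) is Theorem \ref{addcaret}(1): $|vg_3|>|v|$ precisely when $v$ and $g_3$ fail the subtree condition, that is, when forming $vg_3$ forces a caret to be added. The negative tree of the dead end $w$ (Figure \ref{negtree}) is saturated so that \emph{no} generator adds a caret (Lemma \ref{negtreeform}); the crucial observation is that after the rotation effected by $g_1$ and $g_2$ the negative tree of $v$ is no longer saturated, so some $g_3$ must add a caret and the length rises. Concretely I would take $g_1$ and $g_2$ among $x_0^{\pm1}$, since these rotate the extreme spine carets $\wedge_B$ and $\wedge_E$ between the two sides of the tree, changing exactly one caret type by Theorem \ref{onecaret}; I would then track the resulting type of the new right-spine caret with Table \ref{weighttable} to confirm that the second multiplication returns the length to $n$, and read off from Figure \ref{deadend} that $v$ does \emph{not} match the dead-end form. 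Since the classification of Theorem \ref{deadenddiag} is exhaustive, $v$ is not a dead end, so by Definition \ref{deadends} it admits some $g_3$ with $|vg_3|>|v|=n$, completing the escaping path.

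The main obstacle is that Figure \ref{deadend} describes not one diagram but a family with several independent degrees of freedom (the subtype of each $\wedge_{C^i}$ and of $\wedge_D$, and whether $\wedge_E$ is of type \rtnot\ or \rtr). A single fixed choice of $g_1,g_2$ need not return the length to exactly $n$ in every sub-case rather than dropping it to $n-2$, so the verification of step (ii), and the guarantee that $g_2$ can be taken non-backtracking so that $v$ genuinely falls outside the dead-end form, must be carried out against Fordham's weight table in each sub-case, in the same style as the case analysis already used to prove Theorem \ref{deadenddiag}. This bookkeeping is where the real effort lies; once it is done, the lower and upper bounds combine to give depth exactly $2$.
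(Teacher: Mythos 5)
Your skeleton---depth $\ge 2$ from parity plus the dead-end property, then depth $\le 2$ by exhibiting one escaping path of length $3$ with forced profile $n\to n-1\to n\to n+1$---is exactly the paper's, and your lower-bound argument is correct and identical to the paper's. The gap is in the upper bound, which is the real content of the theorem: you never actually produce the escaping path, and you explicitly defer the sub-case bookkeeping your strategy needs. Worse, the instantiation you suggest ($g_1,g_2\in\{x_0^{\pm1}\}$) cannot be completed as described. Take $g_1=g_2=x_0^{-1}$: the second multiplication satisfies the subtree condition (the rightmost child of the new root is $\wedge_F$, which always exists in the dead-end form), and it changes the type of $\wedge_F$ in the negative tree from \rt\ to \lftl. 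By the form of Figure \ref{deadend}, at least one of $f_1,\dots,f_p$ in $T_-$ and at least one of $f_1',\dots,f_p'$ in $T_+$ is nonempty (otherwise $\wedge_F$ would cancel), so $\wedge_F$ is not of type \rtnot\ in either tree; hence its type pair is one of the $(\rt,\rt)$ pairs of weight $2$, and after the change it becomes $(\lftl,\rt)$ of weight $1$ (Table \ref{weighttable}). So $|wx_0^{-2}|=n-2$ for \emph{every} dead end, not $n$. The other non-backtracking choice, $g_1=g_2=x_0$, returns the length to $n$ only for some members of the family (it depends on the type pairing of the next left caret below $\wedge_B$, e.g.\ $(\lftl,\lftl)$ versus $(\lftl,\m^t_u)$), so no fixed choice from $\{x_0^{\pm1}\}$ works, and your fallback---per-case choices of $g_2$ plus verifying that the minimal diagram of $v$ falls outside Figure \ref{deadend} so that Theorem \ref{deadenddiag} applies contrapositively---is precisely the part left unproved.

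The idea you are missing is the one that makes the paper's upper bound uniform and free of weight-table analysis: the classification forces every leaf $e_1,\dots,e_p$ (with the convention $e_p=f_0$) to be a leaf in $T_-$. Hence after the first step $wx_0^{-1}$ (length $q-1$), multiplication by $x_i$ for \emph{any} $i\in\{1,\dots,p\}$ fails the subtree condition---a caret must be added at the leaf $e_i$---and multiplication of $wx_0^{-1}x_i$ by $x_j$, $j\in\{1,\dots,p\}$, again fails the subtree condition at the leaf $e_j$. Theorem \ref{addcaret}(1) then gives a strict length increase at each of these two steps, so $|wx_0^{-1}x_ix_j|\ge q+1$ at once, simultaneously for all dead ends, with no case analysis and no appeal to the exhaustiveness of the dead-end classification. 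Your proposal is a plan whose one concrete mechanism fails; the paper's choice of the second and third generators from $\{x_1,\dots,x_p\}$ rather than $\{x_0^{\pm1}\}$ is what closes the argument.
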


\begin{proof}
\par We show that for arbitrary dead end
$w$, $|wx_0^{-1}x_ix_j|$ for any $i,j\in\{1,2,...,p\}$
will have length greater than $|w|$.  The word
$wx_0^{-1}x_1^2$ which Cleary and Taback use in
\cite{combpropF} to prove this theorem for $p=1$ is a
subcase of this construction.

\par Suppose
$|w|=q$; we have seen
that $|wg^{\pm 1}|=q-1$ for $g\in \gen$.  So
$|wg_1^{\pm 1}g_2^{\pm 1}|\le q$ for $g_1,g_2\in \gen$, which
shows that $w$ cannot have depth 1, and $|wg_1^{\pm 1}g_2^{\pm
1}g_3^{\pm 1}|\le q+1$ for $g_1,g_2,g_3\in \gen$.  So, to show
that a dead end $w$ in $F(p+1)$ has depth 2, we need only find
$g_1,g_2,g_3\in \gen$ such that
$|wg_1^{\epsilon_1}g_2^{\epsilon_2}g_3^{\epsilon_3}|\ge q+1$
where $\epsilon_1,\epsilon_2,\epsilon_3\in \{-1,1\}$.

\par If we consider the tree-pair diagram for $w$ given in
Figure
\ref{deadend}, we can see that $wx_0^{-1}$
will have the tree-pair diagram given in Figure
\ref{wxnotinv}.  $|wx_0^{-1}|=q-1$, and to
multiply
$wx_0^{-1}$ by $x_i$ for $i=1,2,...,p$, we must add a
caret to the tree-pair diagram for $wx_0^{-1}$ on the leaf with
index number $e_i$ (note: for $i=p$, we use the convention
$e_p=f_0$); we call this new caret $E^i$. So the tree-pair
diagram for $wx_0^{-1}x_i$ will have the form given in Figure
\ref{wxnotinvxi}.  Since we had to add a caret to the tree-pair
diagram
for $wx_0^{-1}$ to get $wx_0^{-1}x_i$, by Theorem
\ref{addcaret}, $|wx_0^{-1}x_i|\ge q$. To
multiply $wx_0^{-1}x_i$ by $x_j$ where $j=1,2,...,p$, we
need to add a caret to the tree-pair diagram for $wx_0^{-1}x_i$
on the leaf with index number $e_j$, and then by Theorem
\ref{addcaret}, $|wx_0^{-1}x_ix_j|>q$. Therefore all dead ends
have depth 2 in $F(p+1)$ under $X$.

\begin{figure}[t]
\centering
\includegraphics[width=4in]{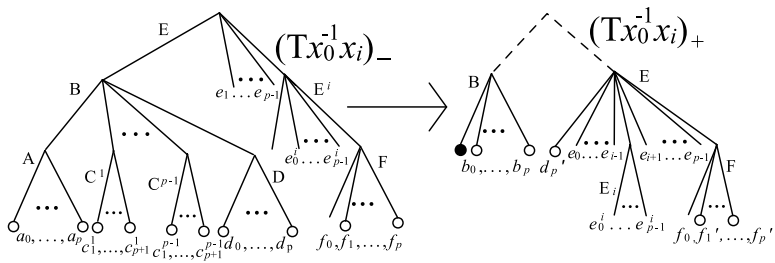}
\caption{Tree-pair diagram representative of $wx_0^{-1}x_i$,
for $i=1,2,...,p$ and $w$ a dead end in $F(p+1)$.
}\label{wxnotinvxi}
\end{figure}

\end{proof}

\bibliography{seesawbib}
\bibliographystyle{amsplain}

\end{document}